\theoremstyle{plain}
\newtheorem{theorem}{Theorem}[section]
\newtheorem{corollary}[theorem]{Corollary}
\newtheorem{proposition}[theorem]{Proposition}
\newtheorem{lemma}[theorem]{Lemma}
\theoremstyle{definition}
\newtheorem{definition}[theorem]{Definition}
\newtheorem{remark}[theorem]{Remark}
\theoremstyle{remark}
\newcommand\GG{\mathcal{G}}
\newcommand\by{\boldsymbol{y}}
\newcommand\bw{\boldsymbol{w}}
\newcommand\bu{\boldsymbol{u}}
\newcommand\bx{\boldsymbol{x}}
\newcommand\bv{\boldsymbol{v}}
\newcommand\bk{\boldsymbol{k}}
\begin{document}

\title{Source-only realizations, weakly reversible deficiency one networks and dynamical equivalence}

\author{Abhishek Deshpande}
\affil[1]{Center for Computational Natural Sciences and Bioinformatics, International Institute of Information Technology Hyderabad, {\tt abhishek.deshpande@iiit.ac.in}}

\maketitle

\begin{abstract}

Reaction networks can display a wide array of dynamics. However, it is possible for different reaction networks to display the same dynamics. This phenomenon is called \emph{dynamical equivalence} and makes network identification a hard problem to solve. We show that to find a \emph{strongly endotactic, endotactic, consistent, or conservative} realization that is dynamically equivalent to the mass-action system generated by a given network, it suffices to consider only the source vertices of the given network. In addition, we show that weakly reversible deficiency one realizations are not unique. We also present a  characterization of the dynamical relationships that exist between several types of weakly reversible deficiency one networks.

\end{abstract}

\section{Introduction}

It is in general difficult to analyze the dynamics exhibited by nonlinear systems. They can exhibit a wide range of behaviour ranging from periodicity, chaos, limit cycles etc. One way of analyzing dynamical systems is to represent them as reaction networks. Reaction networks are ubiquitous in biological systems. The dynamics generated by reaction networks often assume polynomial or power-law form. A key feature here is how the graphical structure of the reaction network regulates its dynamics. Such graphical properties include but are not limited to weak reversibility, complex balancing, endotacticity etc. Reaction networks possessing these properties are conjectured to to exhibit dynamical properties like persistence (which implies that no species can go extinct), permanence (which implies convergence to a compact set) and the existence of a globally attracting steady state.

It turns out that there exists networks with different graphical structures that exhibit the same dynamics. This property is called \emph{dynamical equivalence} and makes the problem of network identification difficult. Dynamical equivalence has also been called 
``macro-equivalence'' by Horn and Jackson~\cite{horn1972general}  and ``confoundability'' by Craciun and Pantea~\cite{craciun2008identifiability}. A natural application is to infer the dynamics of networks that do not possess properties like weak reversibility or endotacticity, but are dynamically equivalent to weakly reversible or endotactic networks. A large body of literature exists on designing efficient algorithms for detecting weakly reversible or endotactic realizations, given a dynamical system and a set of source vertices. One of our contributions in this paper is to give an upper bound on the number of source vertices required for detecting certain families of reaction networks.

Another quantity that is useful in the analysis of reaction networks is the deficiency of the reaction network. Informally, deficiency is a measure of how far the reaction vectors in the network are from their general position. In particular, weakly reversible deficiency zero reaction networks exhibit robust dynamics. It is known that for weakly reversible deficiency zero reaction networks, there exists a Lyapunov function. In addition, there exists a unique steady state that is locally asymptotically stable~\cite{horn1972general}. Recently, it was shown~\cite{craciun2021uniqueness} that weakly reversible deficiency zero realizations are unique, i.e., there cannot exist two different weakly reversible deficiency zero networks that generate the same dynamical system. Further, an algorithm has been proposed to check whether a weakly reversible deficiency zero realization exists given a dynamical system and a set of source vertices~\cite{craciunalgorithm}. A natural extension is to ask whether weakly reversible deficiency one realizations are unique. We show that the answer to this question is no. Another question worth exploring is to analyze the property of dynamical equivalence between different types of weakly reversible deficiency one networks, and between weakly reversible deficiency one and weakly reversible deficiency zero networks. We attempt to answer these questions in this paper.

Our paper is organized as follows: In Section~\ref{sec:euclidean_graphs} we give a short primer on reaction networks. More specifically we introduce reaction networks as graphs embedded in Euclidean space. We define weakly reversible, strongly endotactic, endotactic, consistent and conservative networks. In Section~\ref{sec:net_vectors} we introduce \emph{net vectors} and \emph{ghost vertices}. The idea of network corresponds to the net flux out a vertex. A ghost vertex is a source vertex whose net vector is zero. We will use both these notions later in the paper to prove results about source-only realizations and dynamical equivalence between different  families of reaction networks. In Section~\ref{sec:dynamical_equivalence} we define the notion of \emph{dynamical equivalence}. We show in Lemma~\ref{lem:weak_stoichiometric} that two dynamically equivalent weakly reversible or strongly endotactic realizations have the stoichiometric subspace. In Section~\ref{sec:source_only} in Theorem~\ref{thm:main_source_only} we show that certain families of reaction networks possess the following property: A mass-action system $\GG_k$ is dynamically equivalent to a mass-action system generated by networks from this family iff it is dynamically equivalent to a mass-action system $\tilde{\GG}_{\tilde k}$ generated by networks from this family that uses only the source vertices of $\GG$. The family of such networks include weakly reversible, consistent, endotactic and strongly endotactic networks. Section~\ref{sec:uniqueness} deals with the uniqueness and non-uniqueness of realizations. In particular, we show that weakly reversible deficiency one realizations are not unique, i.e., there exists different weakly reversible deficiency one networks that generate the same dynamics. In Theorem~\ref{thm:zero_dyn_one} we show that two weakly reversible deficiency one realizations with the property that one realization has all linkage classes of deficiency zero and the other realization has all linkage classes of deficiency zero except one which has deficiency one, cannot be dynamically equivalent.

\section{Reaction Networks}\label{sec:euclidean_graphs}

Reaction networks can be characterized as directed graphs in Euclidean space called \textit{Euclidean embedded graphs}~\cite{craciun2015toric,craciun2019polynomial,craciun2020endotactic,craciun2019quasi}. In particular, a reaction network is a tuple $\GG = (V, E)$, where $V\subset\mathbb{R}^n$ is the set of vertices (that correspond to complexes) and $E$ is the set of edges (that correspond to reactions). An edge $(\by,\by')\in E$ corresponds to a reaction $\by\to \by'$. Here $\by$ is the source vertex and $\by'$ is the target vertex. 

\begin{figure}[h!]
\centering
\includegraphics[scale=0.45]{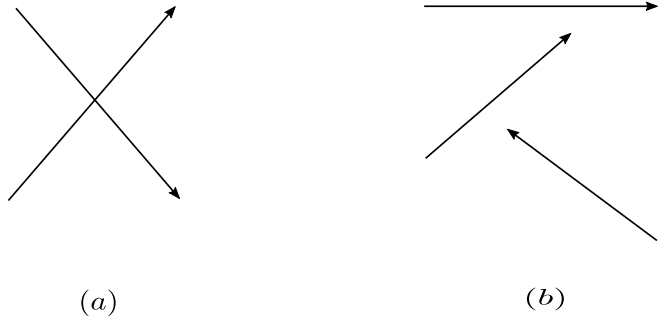}
\caption{\small This figures shows a few examples of Euclidean embedded graphs.}
\label{fig:E_graph}
\end{figure} 

The stoichiometric subspace corresponding to a reaction network $\GG$ is given by the set $S: = \rm{span}\{\by'-\by\, |\, \by\to \by'\in E \}$. Given $\bx_0\in\mathbb{R}^n_{>0}$, the stoichiometric compatibility class of $\bx_0$ is the polyhedron $(\bx_0 + S)\cap\mathbb{R}^n_{>0}$.

A \emph{linkage} class $\mathcal{L}$ is a maximal connected component of $\GG$. The \emph{deficiency} of a reaction is the non-negative integer $\delta$, given by the formula $\delta = |V| - \ell - \rm{dim}(S)$, where $|V|$ is the number of vertices, $\ell$ is the number of linkage classes and $\rm{dim}(S)$ is the dimension of the stoichiometric subspace. Let $V_\mathcal{S}$ denote the set of source vertices and let $\rm{conv}(V_\mathcal{S})$ denote the convex hull of the source vertices of $\GG$. Then

\begin{itemize}

\item $\GG$ is \textit{weakly reversible} if every reaction is part of a strongly connected component.

\item $\GG$ is \textit{strongly endotactic}~\cite{gopalkrishnan2014geometric} if for every vector $\bv\in\mathbb{R}^n$ and $\by\to \by'\in E$ with $\bv\cdot (\by' -\by) < 0$, there exists $\tilde{\by}\to \tilde{\by}'\in E$ such that $\bv\cdot\tilde{\by} < \bv\cdot \by$, $\bv\cdot(\tilde{\by}\to \tilde{\by}') >0$ and $\bv\cdot\tilde{\by} < \bv\cdot \hat{\by}$ for all $\hat{\by}\in V_\mathcal{S}$.

\item  $\GG$ is \textit{endotactic}~\cite{craciun2013persistence} if for every vector $\bv\in\mathbb{R}^n$ and $\by\to \by'\in E$ with $\bv\cdot (\by' -\by) < 0$, there exists $\tilde{\by}\to \tilde{\by}'\in E$ such that $\bv\cdot\tilde{\by} < \bv\cdot \by$, $\bv\cdot(\tilde{\by}\to \tilde{\by}') >0$. 

\item  $\GG$ is \textit{consistent} if there exists positive constants $\lambda_{\by\to \by'}$ such that $\displaystyle\sum_{\by\to \by'\in E} \lambda_{\by\to \by'}(\by' - \by) = 0$. 

\item $\GG$ is \textit{conservative} if there exists a vector $\bv\in \mathbb{R}^n_{>0}$ such that $\bv\in S^{\perp}$. 

\end{itemize}

Figure~\ref{fig:reaction_types} shows a few examples of such networks.

\begin{figure}[h!]
\centering
\includegraphics[scale=0.45]{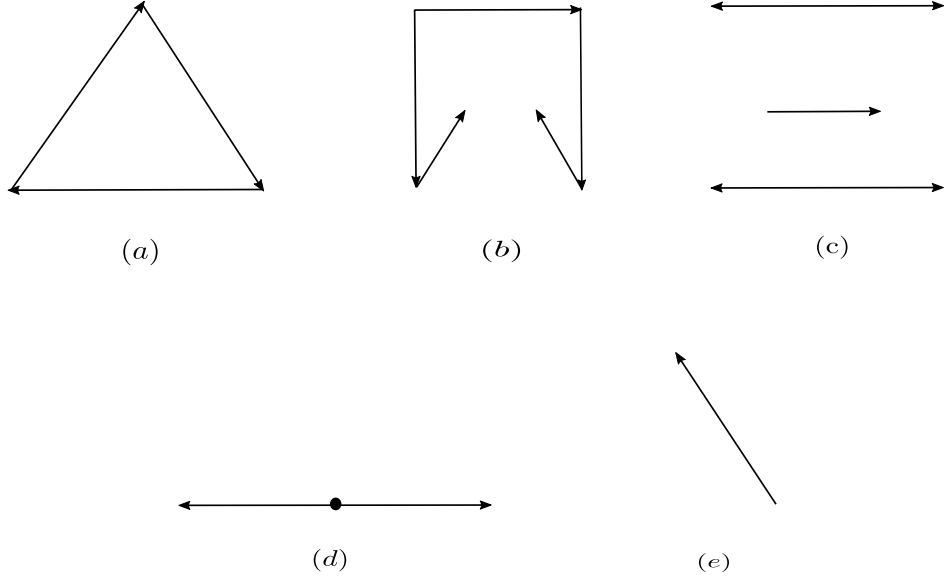}
\caption{\small (a) Weakly reversible reaction network. Every reaction in this network is part of a strongly connected component. (b) Strongly endotactic reaction network. (c) Endotactic reaction network. (d) Consistent reaction network. The sum of the reaction vectors in this network is zero. (e) Conservative reaction network. There exists a vector $(1,1)\in\mathbb{R}^2_{>0}$ that is orthogonal to the stoichiometric subspace of the network (which is the one-dimensional subspace spanned by the vector $(-1,1)$).}
\label{fig:reaction_types}
\end{figure}

The chain of inclusions among the above mentioned networks is as follows: Weakly reversible networks consisting of a single linkage class are strongly endotactic. Weakly reversible networks networks are endotactic. Strongly endotactic networks are also endotactic. Further endotactic networks are consistent. It is possible for a network to be 

\renewcommand{\theenumi}{(\roman{enumi})}
\begin{enumerate}

\item consistent, but not conservative

\item conservative, but not consistent.

\item consistent and conservative

\item neither consistent nor conservative.

\end{enumerate}

Figure~\ref{fig:consistent_conservative} gives examples of networks that are of the four types mentioned above.

\begin{figure}[h!]
\centering
\includegraphics[scale=0.4]{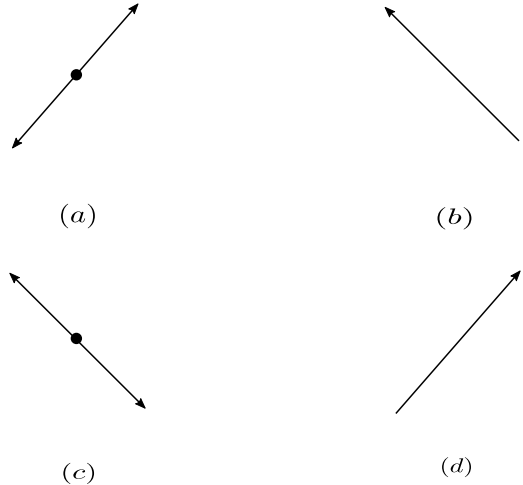}
\caption{\small (a) Consistent but not conservative network. (b) Conservative but not consistent network. (c) Consistent and conservative network (d) Neither consistent nor conservative network.}
\label{fig:consistent_conservative}
\end{figure}

Conservative networks are also related to the notion of \emph{critical sets}, which are relevant in the context of autocatalysis~\cite{deshpande2014autocatalysis,hordijk2010autocatalytic,hordijk2004detecting,hordijk2012structure}. It can be shown that if a network is conservative, then the set of species is not a critical set~\cite{deshpande2014autocatalysis}. Critical sets also show up in the context of persistence; it is known that if there are no \emph{critical siphons} in the network, then the dynamics exhibited by it is persistent~\cite{angeli2007petri}.

It turns out that there is a geometrically intuitive way to deduce whether the network is endotactic or strongly endotactic. This is given by the \emph{parallel sweep} test~\cite{craciun2013persistence,gopalkrishnan2014geometric}. The parallel sweep test for strongly endotactic networks can be summarized as follows~\cite{gopalkrishnan2014geometric}

\begin{proposition}

Let $\GG=(V,E)$ be a reaction network. For every $\bu\notin S^{\perp}$, we sweep with a hyperplane perpendicular to $\bu$ in the direction of $u$ until it hits the boundary of $\rm{conv}(V_\mathcal{S})$. Let $H_{\rm supp}$ denote this supporting hyperplane.
On $H_{\rm supp}$, if all reactions $\by\rightarrow \by'\in E$ with source vertex $\by\in H_{\rm supp}$ satisfy $\bw\cdot (\by'-\by)\geq 0$ and there exists a reaction $\tilde{\by}\rightarrow \tilde{\by}'\in E$ such that $\bw\cdot (\tilde{\by}' - \tilde{\by})>0$, then the network passes the parallel sweep test, i.e., the network is strongly endotactic. If the network fails to satisfy the parallel sweep test, it is not strongly endotactic.

\end{proposition}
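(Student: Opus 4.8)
The plan is to prove that a network passes the parallel sweep test for every $\bu\notin S^\perp$ if and only if it is strongly endotactic; the two assertions in the proposition are then the forward implication together with the contrapositive of the reverse implication. I first fix the geometry and notation: sweeping a hyperplane perpendicular to $\bu$ in the direction of $\bu$ until it first meets $\mathrm{conv}(V_\mathcal{S})$ yields $H_{\mathrm{supp}}=\{\bx:\bu\cdot\bx=m\}$ with $m=\min_{\hat\by\in V_\mathcal{S}}\bu\cdot\hat\by$, so that the source vertices lying on $H_{\mathrm{supp}}$ are precisely the $\bu$-minimizers over $V_\mathcal{S}$. I identify the vector $\bw$ appearing in the inequalities with the sweep vector $\bu$, and I read the final clause of the definition of strongly endotactic as requiring $\tilde\by$ to attain $\min_{\hat\by\in V_\mathcal{S}}\bu\cdot\hat\by$, i.e.\ $\tilde\by\in H_{\mathrm{supp}}$.

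For the forward implication, assume the test conditions hold for every $\bu\notin S^\perp$. Let $\bv$ be arbitrary and let $\by\to\by'$ be any reaction with $\bv\cdot(\by'-\by)<0$. Since a strictly backward reaction exists, $\bv\notin S^\perp$, so I may apply the test with $\bu=\bv$. It furnishes a reaction $\tilde\by\to\tilde\by'$ with $\tilde\by\in H_{\mathrm{supp}}$ and $\bv\cdot(\tilde\by'-\tilde\by)>0$. Because every reaction with source on $H_{\mathrm{supp}}$ is weakly forward, the source $\by$ of the backward reaction cannot lie on $H_{\mathrm{supp}}$, giving $\bv\cdot\by>m=\bv\cdot\tilde\by$. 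Hence $\tilde\by\to\tilde\by'$ satisfies $\bv\cdot\tilde\by<\bv\cdot\by$, $\bv\cdot(\tilde\by'-\tilde\by)>0$, and $\bv\cdot\tilde\by\le\bv\cdot\hat\by$ for all $\hat\by\in V_\mathcal{S}$, so $\GG$ is strongly endotactic.

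For the reverse implication, assume $\GG$ is strongly endotactic and fix $\bu\notin S^\perp$. Condition (1) is immediate: a strictly backward reaction with source on $H_{\mathrm{supp}}$ would, by the definition, produce a source vertex $\tilde\by$ with $\bu\cdot\tilde\by<m$, contradicting the minimality of $m$; hence every reaction with source on $H_{\mathrm{supp}}$ satisfies $\bu\cdot(\by'-\by)\ge 0$. The substantive point is Condition (2), and the key step — which I expect to be the main obstacle — is first to exhibit a strictly backward reaction for $\bu$: if there were none, then since $\bu\notin S^\perp$ some reaction would be strictly forward for $\bu$, hence strictly backward for $-\bu$, and applying the definition of strongly endotactic to $-\bu$ would return a reaction strictly backward for $\bu$, a contradiction. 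Given such a strictly backward reaction for $\bu$, the definition (with $\bv=\bu$) supplies $\tilde\by\to\tilde\by'$ with $\tilde\by$ attaining the minimum $m$, so $\tilde\by\in H_{\mathrm{supp}}$, and with $\bu\cdot(\tilde\by'-\tilde\by)>0$, which is exactly Condition (2). Thus the test passes for every $\bu\notin S^\perp$, and the contrapositive delivers the closing claim that a network failing the test cannot be strongly endotactic.

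Apart from this bookkeeping, the only genuinely delicate ingredient is the symmetric $-\bu$ argument establishing that a strongly endotactic network always carries a strictly backward reaction in each direction $\bu\notin S^\perp$; the remaining steps are direct translations between the swept-hyperplane picture and the three inequalities in the definition.
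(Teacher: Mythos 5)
Your proof is correct, but there is nothing in the paper to compare it against: the paper states this proposition without proof, as a recollection of the parallel sweep test of Gopalkrishnan, Miller and Shiu~\cite{gopalkrishnan2014geometric}, deferring the argument to that reference. Your write-up is therefore a genuine, self-contained reconstruction, and the two interpretive repairs you make at the outset are exactly the ones needed for the statement to be true as read. First, identifying $\bw$ with the sweep vector $\bu$ and requiring the witnessing reaction $\tilde{\by}\rightarrow\tilde{\by}'$ to have its source \emph{on} $H_{\rm supp}$ is essential: under the weaker reading in which the strictly forward reaction may sit anywhere, the forward implication is false (in the plane, take a source on the sweep hyperplane whose only reaction is parallel to that hyperplane, with the strictly forward reaction issuing from a different source; this passes the weak test for that $\bu$ yet violates strong endotacticity, since no $\bu$-minimizing source carries a strictly forward reaction). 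Second, reading the final clause of the paper's definition as $\bv\cdot\tilde{\by}\le\bv\cdot\hat{\by}$ for all $\hat{\by}\in V_\mathcal{S}$ is forced, because with strict inequality and $\hat{\by}$ ranging over all of $V_\mathcal{S}$ (including $\tilde{\by}$ itself) the clause is unsatisfiable. Beyond this bookkeeping, the one substantive step is the one you flag yourself: the $-\bu$ symmetry argument showing that a strongly endotactic network must admit a strictly $\bu$-backward reaction whenever $\bu\notin S^{\perp}$, which is what makes the existence half of the test non-vacuous rather than conditional on a backward reaction happening to exist. That argument is correct (a strictly $\bu$-forward reaction is strictly $(-\bu)$-backward, and the definition applied to $-\bu$ then returns a strictly $\bu$-backward reaction, a contradiction), and both directions of your equivalence go through.
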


 The parallel sweep test for endotactic networks can be summarized as follows~\cite{craciun2013persistence}

\begin{proposition}

Let $\GG=(V,E)$ be a reaction network. For every $\bu$, we sweep with a hyperplane $H$ perpendicular to $u$ in the direction of $u$ until it hits the boundary of $\rm{conv}(V_\mathcal{S})$. On $H$, check if all reactions $\by\rightarrow \by'\in E$ with source vertex $\by\in H$ satisfy $\bw\cdot (\by'-\by)\geq 0$. If yes, and there exists a reaction $\tilde{\by}\rightarrow \tilde{\by}'$ such that $\bw\cdot (\tilde{\by}' - \tilde{\by})>0$, then the network passes the parallel sweep test, i.e., the network is endotactic. If there exists a reaction $\by_0\rightarrow \by'_0\in E$ with $\by_0\in H$ such that $\bw\cdot (\by'_0 - \by_0)<0$, then the network fails to satisfy the parallel sweep test, i.e., it is not strongly endotactic. If we have $\bw\cdot (\by' - \by)=0$ for all reactions $\by\rightarrow \by'\in E$ with source vertex $\by\in H$, then continue sweeping with the hyperplane until you hit another source vertex and repeat the same steps as above. If you continue sweeping with the hyperplane and have covered all source vertices, then the network passes the parallel sweep test and the network is strongly endotactic.

\end{proposition}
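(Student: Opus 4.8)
The plan is to show that the per-direction procedure is a faithful encoding of the endotactic condition, and then to read off each of the four stated outcomes according to where the first non-flat source-face sits. Fix a sweep direction $\bu$, with the orientation chosen so that $\bw=\bu$ and an ``up'' reaction means $\bw\cdot(\by'-\by)>0$. Order the source vertices by the value $\bu\cdot\by$ into levels $m_0<m_1<\dots<m_r$, so that the first hyperplane the sweep meets is the minimal level $H=F_0$, and continuing the sweep exposes $F_1,F_2,\dots$ in turn. The elementary fact driving everything is that endotacticity in direction $\bv=\bu$ is local to levels: a reaction with $\bu\cdot(\by'-\by)<0$ (a ``down'' reaction) is admissible only if there is an up reaction $\tilde\by\to\tilde\by'$ with $\bu\cdot\tilde\by<\bu\cdot\by$, i.e. emanating from a strictly lower level.

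First I would prove the passing case. Suppose that at the first non-flat level $F_{i^*}$ (the lowest level carrying a reaction with $\bu\cdot(\by'-\by)\neq 0$) there is no down reaction but at least one up reaction $\tilde\by\to\tilde\by'$. Every down reaction in direction $\bu$ must then originate at a level strictly above $i^*$, since all levels below $i^*$ are flat and $F_{i^*}$ itself carries no down reaction; hence $\bu\cdot\tilde\by=m_{i^*}<\bu\cdot\by$ for each such source $\by$, so $\tilde\by\to\tilde\by'$ witnesses endotacticity for every down reaction in this direction. Ranging over all $\bu$ reproduces the endotactic definition verbatim, giving the first stated conclusion.

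Next I would establish the failure case and the continue step. If the minimal face $H=F_0$ carries a reaction $\by_0\to\by_0'$ with $\bw\cdot(\by_0'-\by_0)<0$, take $\bv=\bu$: this is a down reaction whose source minimizes $\bu\cdot\by$ over $V_\mathcal{S}$, so no source $\tilde\by$ can satisfy $\bu\cdot\tilde\by<\bu\cdot\by_0$ and no witness exists. Thus the network is not endotactic, and since strongly endotactic networks are endotactic (as recorded in the inclusion chain), it is a fortiori not strongly endotactic, which is precisely the stated conclusion for a failed sweep. The continue step is justified by the same locality: if every reaction off $F_0$ satisfies $\bw\cdot(\by'-\by)=0$, these flat reactions neither violate the endotactic inequality nor can serve as strict-below witnesses, so no information about direction $\bu$ is gained or lost by advancing the hyperplane to $F_1$ and repeating the test.

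Finally I would treat the exhausted sweep and locate where strong endotacticity enters. If the hyperplane passes through every source vertex with all reactions flat, then $\bu\cdot(\by'-\by)=0$ for all reactions, i.e. $\bu\in S^\perp$; such a direction carries no down reactions, so the endotactic and strong endotactic requirements are both vacuous and the direction passes. The sharper point explaining the ``strongly endotactic'' wording is the location of the witness: strong endotacticity additionally demands a witness on the minimal face, $\bu\cdot\tilde\by<\bu\cdot\hat\by$ for all $\hat\by\in V_\mathcal{S}$, which holds exactly when the first non-flat level is $F_0$ itself — when the sweep certifies an up reaction already on the minimal face, or trivially when $\bu\in S^\perp$. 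Hence when the procedure never advances past the minimal face before passing — equivalently, when for each $\bu$ either $\bu\in S^\perp$ or $F_0$ supplies the up witness — the same sweep certifies the strong condition. The main obstacle I anticipate is the bookkeeping of flat levels together with the strict inequality $\bu\cdot\tilde\by<\bu\cdot\by$: one must check that an up reaction at the first non-flat level lies strictly below every down reaction (so it is a legitimate witness) while an up reaction at the same level as a down reaction is not, which is exactly what forces the ``no down reaction at $F_{i^*}$'' requirement and separates the endotactic from the strongly endotactic conclusion.
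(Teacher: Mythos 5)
The paper itself offers no proof of this proposition: it is recalled, citation and all, from Craciun--Nazarov--Pantea \cite{craciun2013persistence}, so there is no internal argument to compare yours against and it must be judged on its own merits. On those merits, your core argument is correct and is the standard one: ordering the source vertices into $\bu$-levels $m_0<\dots<m_r$, observing that a witness for a down reaction must be an \emph{up} reaction at a \emph{strictly} lower level, you correctly conclude that all-flat levels are inert (they neither violate the endotactic inequality nor can serve as witnesses, which is exactly what justifies the ``continue sweeping'' step), that a down reaction at the first non-flat level $F_{i^*}$ admits no witness (failure branch), that an up reaction with no down reaction at $F_{i^*}$ witnesses every down reaction in direction $\bu$ (passing branch), and that an exhausted sweep forces $\bu\in S^{\perp}$, making the condition vacuous. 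You also silently, and correctly, repair two defects in the statement as printed: you identify $\bw$ with $\bu$, and you require the up-witness $\tilde\by\to\tilde\by'$ to have its source \emph{on the current hyperplane} $H$ --- without that reading the test is unsound, since an up reaction far above $H$ cannot witness a down reaction at an intermediate level.

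The one genuine error is the final biconditional: it is not true that the strong condition in direction $\bu$ holds ``exactly when the first non-flat level is $F_0$, or $\bu\in S^{\perp}$.'' Strong endotacticity, like endotacticity, constrains only down reactions, so it holds vacuously in any direction with no down reactions at all, even when $F_0$ is flat and $\bu\notin S^{\perp}$. For instance, take source vertices $(0,0)$ and $(0,1)$ with reactions $(0,0)\to(1,0)$ and $(0,1)\to(0,2)$, and $\bu=(0,1)$: here $S=\RR^2$, the minimal face $F_0=\{(0,0)\}$ is flat, the first non-flat level is $F_1$, yet direction $\bu$ is (vacuously) strongly endotactic. Sufficiency of your criterion does survive --- an up reaction on $F_0$ is $\bu$-minimal over all of $V_{\mathcal{S}}$, hence a legitimate strong witness, provided one reads the paper's minimality condition $\bv\cdot\tilde\by<\bv\cdot\hat\by$ for \emph{all} $\hat\by\in V_{\mathcal{S}}$ (impossible as literally written, since $\tilde\by\in V_{\mathcal{S}}$) as the standard non-strict inequality of \cite{gopalkrishnan2014geometric} --- but necessity fails. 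Since that paragraph is only your attempt to rationalize the proposition's own conflation of ``endotactic'' and ``strongly endotactic'' in its failure and exhausted branches (itself a typo inherited by the paper), the defect does not touch your proof of the endotactic sweep test, which is complete.
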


A reaction network can generate a wide array of dynamical systems. If we assume mass-action kinetics~\cite{feinberg1979lectures,voit2015150,guldberg1864studies,yu2018mathematical,gunawardena2003chemical,adleman2014mathematics}, the dynamical system generated by $\GG$ is given by
\begin{eqnarray}\label{eq:mass_action}
\frac{d\bx}{dt} = \displaystyle\sum_{\by\rightarrow \by'\in E}k_{\by\rightarrow \by'}{\bx}^{\by}(\by' - \by)
\end{eqnarray}

Let us denote vector of rate constants in this dynamical system by $\bk$. Then the associated mass-action system will be represented as $(\GG,\bk)$. A mass-action system is \emph{complex balanced} if there exists a point $\bx_0\in\mathbb{R}^n_{>0}$ such that for any vertex $\by\in V$, we have 
\begin{eqnarray}
\displaystyle\sum_{\by\rightarrow\by'\in E} k_{\by\rightarrow\by'}{\bx_0}^{\by} = \displaystyle\sum_{\tilde{\by}\rightarrow\by\in E}k_{\tilde{\by}\rightarrow\by}{\bx_0}^{\tilde{\by}}
\end{eqnarray}

As remarked before, graphical properties like weak reversbility, endotacticity and complex balancing are closely related to dynamical properties like persistence, permanence and global stability. Anderson has proved the \emph{Global Attractor Conjecture}, which says that complex balanced systems have a globally attracting steady state within every stoichiometric compatibility class when the associated reaction graph consists of a single linkage class. It is know from the work of Craciun, Nazarov and Pantea~\cite{craciun2013persistence} that two-dimensional endotactic networks are permanent. This has been extended by Pantea~\cite{pantea2012persistence} to endotactic networks with two-dimensional subspace. Gopalkrishnan, Miller and Shiu~\cite{gopalkrishnan2014geometric} have shown that strongly endotactic networks are permament. Recently, Craciun~\cite{craciun2015toric} has proposed a proof of the \emph{Global Attractor Conjecture}.

\section{Net vectors and Ghost vertices}\label{sec:net_vectors}

The idea of this section is to introduce the notions of net vectors and ghost vertices, which will be used later in the paper to analyze the dynamics of several families of networks. The dynamical system generated by $(\GG,\bk)$ (given by Equation~\ref{eq:mass_action}) can be rewritten as
\begin{eqnarray}
\frac{d\bx}{dt} = \displaystyle\sum_{\by \in V_S} {\bx}^{\by}\displaystyle\sum_{\by'\in V}k_{\by\rightarrow \by'}{\bx}^{\by}(\by' - \by)
\end{eqnarray}
Then $\displaystyle\sum_{\by'\in V}k_{\by\rightarrow \by'}{\bx}^{\by}(\by' - \by)$ will denote the \textbf{net vector} corresponding to the source vertex $\by$.

\begin{remark}
Note that the net vector corresponding to the source vertex $\by$ may consist of terms like $(\by' - \by)$ where $\by\rightarrow\by'\notin E$. In such cases, we set $k_{\by\rightarrow \by'}=0$. 
\end{remark}

If the net reaction vector corresponding to a source vertex $\by_0$ is zero, then $\by_0$ will be called a \textbf{ghost} vertex. Previous works~\cite{craciun2020efficient} have referred to such vertices as \emph{virtual sources}. Figure~\ref{fig:ghost_vertex} shows an example of a ghost vertex.

\begin{figure}[h!]
\centering
\includegraphics[scale=0.45]{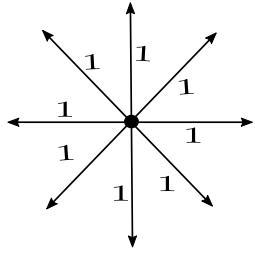}
\caption{\small This figure shows an example of ghost vertex; the net vector corresponding to this source vertex is zero.}
\label{fig:ghost_vertex}
\end{figure}

\begin{theorem}\label{thm:bound_ghost_vertex}
Consider a reaction network $\GG=(V,E)$ with deficiency $\delta$. Then $\GG$ can have at most $\delta$ ghost vertices.
\end{theorem}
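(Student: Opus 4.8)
The plan is to recast ghost vertices as elements of a single $\delta$-dimensional space and then bound their number by a linear-independence argument. Write $m=|V|$ and $s=\dim(S)$, and introduce the incidence map $\partial\colon\mathbb{R}^{E}\to\mathbb{R}^{V}$ sending the basis vector of an edge $\by\to\by'$ to $\omega_{\by'}-\omega_{\by}$ (where $\{\omega_{\by}\}$ is the standard basis indexed by vertices), together with the complex map $Y\colon\mathbb{R}^{V}\to\mathbb{R}^{n}$, $Y\omega_{\by}=\by$. Let $R=\operatorname{Im}\partial$; since $\GG$ has $\ell$ linkage classes, $\dim R=m-\ell$, and $Y(R)=\operatorname{span}\{\by'-\by\mid \by\to\by'\in E\}=S$. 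Applying rank--nullity to the surjection $Y|_{R}\colon R\to S$ gives $\dim(\ker Y\cap R)=(m-\ell)-s=\delta$. For a source vertex $\by$ set $w_{\by}=\sum_{\by'}k_{\by\to\by'}(\omega_{\by'}-\omega_{\by})\in R$; then the net vector is exactly $g(\by)=Y w_{\by}$, so $\by$ is a ghost vertex precisely when $w_{\by}\in\ker Y\cap R$. Thus it suffices to show that the vectors $\{w_{\by}\mid \by \text{ a ghost}\}$ are linearly independent, for then their number cannot exceed $\dim(\ker Y\cap R)=\delta$.

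The independence is the heart of the proof. A relation $\sum_{\by} c_{\by} w_{\by}=0$ is equivalent to $A_{k}\bv=0$, where $A_{k}$ is the matrix whose $\by$-th column is $w_{\by}$ (the weighted negative Laplacian of the reaction graph) and $\bv=\sum_{\by}c_{\by}\omega_{\by}$ is supported on the ghost set. The key structural fact I would use is that $\ker A_{k}$ is spanned by vectors each supported on a single \emph{terminal} strongly connected component of $\GG$ (a strongly connected component with no outgoing edges), this being the standard description of the stationary measures of the Markov generator $A_{k}^{\top}$. Hence any $\bv\in\ker A_{k}$ is supported on a union of terminal strongly connected components. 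I would then show no terminal strongly connected component $T$ can consist entirely of ghost vertices: choosing $\bu\in\mathbb{R}^{n}$ generic so that $\by\mapsto\bu\cdot\by$ has a unique maximizer $\by^{\ast}$ on $T$, every out-neighbour of $\by^{\ast}$ lies in $T$ (as $T$ is terminal) and is distinct from $\by^{\ast}$, so $\bu\cdot g(\by^{\ast})=\sum_{\by'}k_{\by^{\ast}\to\by'}\,\bu\cdot(\by'-\by^{\ast})<0$, forcing $g(\by^{\ast})\neq 0$. Thus $\by^{\ast}$ is not a ghost. Consequently a $\bv$ supported on ghosts cannot meet any terminal strongly connected component, so $\bv=0$ and all $c_{\by}=0$, giving the required independence.

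The main obstacle is exactly this kernel description of $A_{k}$ together with the accompanying extremal-vertex argument; everything else is bookkeeping with the deficiency formula. If one prefers a route that localizes the difficulty, the same conclusion can be approached linkage class by linkage class: deficiency is superadditive over linkage classes, $\delta\geq\sum_{i}\delta_{i}$ (because $S=\sum_{i}S_{i}$ forces $s\leq\sum_{i}s_{i}$), so it is enough to bound the ghosts in each connected component by its own deficiency; this is cleanest when each component is weakly reversible, since then the component is its own unique terminal strongly connected component and the extremal-vertex step applies to it directly. I would present the global version above as the main line and keep the per-component reduction in reserve, since in either case the only delicate point is ruling out an all-ghost terminal component.
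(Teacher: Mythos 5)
Your proposal is correct, but it takes a genuinely different route from the paper. The paper's own proof is a two-line contradiction argument that outsources everything to prior work: by Remark~\ref{rmk:procedure}, the procedure of \cite{craciun2021uniqueness}[Theorems 4.8 and 4.12] removes a ghost vertex and lowers the deficiency by one while preserving dynamical equivalence, so if there were $t\geq \delta+1$ ghost vertices, iterating $\delta+1$ times would yield a network of deficiency $-1$, which is impossible. You instead give a direct, essentially self-contained dimension count: you identify the deficiency with $\dim(\ker Y\cap \operatorname{Im}\partial)$, observe that each ghost vertex $\by$ contributes a vector $w_{\by}$ lying in this $\delta$-dimensional space, and prove these vectors are linearly independent by combining the standard description of $\ker A_k$ (spanned by vectors with positive support on the terminal strongly connected components) with an extremal-vertex argument showing no terminal component can consist entirely of ghosts. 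What your approach buys is transparency and structural insight --- it explains \emph{why} the bound holds (ghosts inject into the ``deficiency space''), and it does not depend on the iterative reduction machinery of \cite{craciun2021uniqueness}; what the paper's approach buys is brevity, at the price of resting entirely on a nontrivial procedure proved elsewhere. One small point to patch in your write-up: the strict inequality $\bu\cdot g(\by^{\ast})<0$ presumes $\by^{\ast}$ has at least one outgoing edge. This is automatic when the terminal component has two or more vertices (strong connectivity provides an out-edge, and reaction networks have no self-loops), while a single-vertex terminal component with no outgoing edges contains no source vertex at all, hence trivially no ghost; so your conclusion that every terminal component meets the non-ghost set stands in all cases, but the degenerate case deserves a sentence.
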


\begin{proof}

For contradiction, assume that $\GG$ has $t$ ghost vertices where $t \geq \delta + 1$. By Remark~\ref{rmk:procedure}, applying the procedure in~\cite{craciun2021uniqueness}[Theorems 4.8 and 4.12] $\delta + 1$ times gives a reaction network of deficiency $-1$ and  $t - (\delta+1)$ ghost vertices, which is not possible. 

\end{proof}

\begin{corollary}\label{cor:def_zero_ghost}
Consider a reaction network $\GG=(V,E)$ with deficiency $0$. Then $\GG$ cannot have any ghost vertices.
\end{corollary}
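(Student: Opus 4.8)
The plan is to obtain this as an immediate specialization of Theorem~\ref{thm:bound_ghost_vertex}. That theorem asserts that a network of deficiency $\delta$ has at most $\delta$ ghost vertices. Substituting $\delta = 0$ gives that $\GG$ has at most $0$ ghost vertices, which is exactly the claim that $\GG$ has none. This requires nothing beyond reading off the $\delta = 0$ case, so the corollary follows in one line.

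For completeness I would also record a self-contained argument that does not route through the deficiency-reduction procedure, using the linear-algebraic meaning of deficiency. Let $\partial\colon\RR^E\to\RR^V$ send each edge $\by\to\by'$ to the difference $\omega_{\by'}-\omega_{\by}$ of the corresponding standard basis vectors of $\RR^V$, and let $Y\colon\RR^V\to\RR^n$ send each complex to its position vector $\by$. Then $\rm{dim}(\rm{im}\,\partial) = |V| - \ell$, the stoichiometric subspace is $S = Y(\rm{im}\,\partial)$, and the rank--nullity identity gives $\rm{dim}(S) = (|V|-\ell) - \rm{dim}(\ker Y\cap\rm{im}\,\partial)$. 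Comparing with $\delta = |V|-\ell-\rm{dim}(S)$ shows that $\delta = \rm{dim}(\ker Y\cap\rm{im}\,\partial)$; in particular $\delta = 0$ forces $\ker Y\cap\rm{im}\,\partial = \{0\}$.

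It then suffices to show that a ghost vertex would produce a nonzero element of this intersection. Suppose $\by_0$ is a ghost vertex, and set $g = \sum_{\by'\in V}k_{\by_0\to\by'}(\omega_{\by'}-\omega_{\by_0})\in\RR^V$, where the terms with $k_{\by_0\to\by'}=0$ drop out so that $g\in\rm{im}\,\partial$. By construction $Y(g)$ equals the net vector at $\by_0$, which is zero by the ghost-vertex assumption, so $g\in\ker Y$. Finally $g\neq 0$: since $\by_0$ is a genuine source vertex it has at least one outgoing edge with positive rate constant, so the coefficient of $\omega_{\by_0}$ in $g$ is $-\sum_{\by'}k_{\by_0\to\by'} < 0$. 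Hence $g$ is a nonzero element of $\ker Y\cap\rm{im}\,\partial$, contradicting $\delta = 0$.

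There is essentially no obstacle here; the corollary is a direct reading of the theorem. The only point in the alternative argument that needs a word of care is the verification that $g\neq 0$, i.e.\ that a source vertex carries strictly positive total outflux (and has no self-loop), which is immediate from the definition of a source vertex.
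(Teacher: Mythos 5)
Your first paragraph is exactly the paper's own proof: Corollary~\ref{cor:def_zero_ghost} is stated there with the one-line justification ``Follows from Theorem~\ref{thm:bound_ghost_vertex},'' which is precisely the specialization $\delta = 0$ that you carry out. So on the primary route, you and the paper agree completely.

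Your supplementary self-contained argument is correct and genuinely different, and it is worth a remark. The paper's Theorem~\ref{thm:bound_ghost_vertex} rests on an external deficiency-reduction procedure (Theorems 4.8 and 4.12 of the cited uniqueness paper), so the corollary as proved in the paper inherits that dependence. Your alternative bypasses it entirely: the identity $\delta = \dim(\ker Y \cap \mathrm{im}\,\partial)$ is the standard linear-algebraic reading of deficiency, and your element $g = \sum_{\by'} k_{\by_0\to\by'}(\omega_{\by'}-\omega_{\by_0})$ is a correct witness --- it lies in $\mathrm{im}\,\partial$ because each summand is a positive multiple of an edge image, it lies in $\ker Y$ precisely because $\by_0$ is a ghost vertex, and it is nonzero because the $\omega_{\by_0}$-coordinate equals $-\sum_{\by'} k_{\by_0\to\by'} < 0$ (using, as you note, that a source vertex has at least one outgoing edge and no self-loop). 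This gives a direct, elementary proof of the $\delta = 0$ case that does not route through the general bound at all, which is arguably more robust than the paper's chain of dependencies; what it does not give you, of course, is the full statement of Theorem~\ref{thm:bound_ghost_vertex} for $\delta \geq 1$.
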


\begin{proof}
Follows from Theorem~\ref{thm:bound_ghost_vertex}.
\end{proof}

\begin{lemma}\label{lem:strongly_endotactic_prop}
Consider a strongly endotactic mass-action system $(\GG,k)$. Let $\{\bw_i\}_{i=1}^m$ denote the net reaction vectors of $\GG$. Define a matrix $W$ with $\{\bw_i\}_{i=1}^m$ as its columns. Then, 
\begin{enumerate}
\item\label{lem:first_condition} $\text{span}\{\bw_i\}_{i=1}^m=S$. 
\item If $\GG$ is deficiency zero, then any $m-1$ reaction vectors from the set $\{\bw_i\}_{i=1}^m$ are linearly independent.
\end{enumerate}

\end{lemma}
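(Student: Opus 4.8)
The plan is to treat the two parts separately, drawing on strong endotacticity through its global-minimizer formulation.

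For part 1 the inclusion $\mathrm{span}\{\bw_i\}_{i=1}^m\subseteq S$ is immediate, since each net vector $\bw_i=\sum_{\by'}k_{\by_i\to\by'}(\by'-\by_i)$ is a linear combination of reaction vectors. For the reverse inclusion I would argue by contradiction: if $\mathrm{span}\{\bw_i\}\subsetneq S$, pick a nonzero $\bv\in S$ orthogonal to every $\bw_i$, so $\bv\cdot\bw_i=0$ for all $i$. Since $\bv\in S\setminus\{0\}$ we have $\bv\notin S^{\perp}$, so some reaction satisfies $\bv\cdot(\by'-\by)\neq 0$, and after replacing $\bv$ by $-\bv$ if necessary there is a reaction with $\bv\cdot(\by'-\by)<0$. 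Applying strong endotacticity to this reaction produces a source $\tilde\by$ minimizing $\bv\cdot\by$ over $V_{\mathcal S}$ and admitting a reaction $\tilde\by\to\tilde\by'$ with $\bv\cdot(\tilde\by'-\tilde\by)>0$. The key point is that a minimizing source can have no strictly downward reaction: any reaction $\tilde\by\to\by''$ with $\bv\cdot(\by''-\tilde\by)<0$ would, by a second application of strong endotacticity, demand a source with strictly smaller value of $\bv\cdot\by$, contradicting minimality of $\tilde\by$. Hence every reaction out of $\tilde\by$ contributes a nonnegative term and the up-reaction contributes a strictly positive one, giving $\bv\cdot\bw_{\tilde\by}>0$, which contradicts $\bv\cdot\bw_{\tilde\by}=0$.

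For part 2 I would first use deficiency zero to transfer the problem to the complex space. Writing $Y$ for the linear map sending each complex to its coordinate vector in $\RR^n$, we have $\bw_i=Y\boldsymbol{g}_i$ where $\boldsymbol{g}_i=\sum_{\by'}k_{\by_i\to\by'}([\by']-[\by_i])$ is the net vector expressed in the span of the complexes; these $\boldsymbol{g}_i$ lie in the image of the incidence map. Since the deficiency equals the dimension of the intersection of $\ker Y$ with that image, deficiency zero makes $Y$ injective on $\mathrm{span}\{\boldsymbol{g}_i\}$; consequently, for every index set $T$ the family $\{\bw_i\}_{i\in T}$ is independent if and only if $\{\boldsymbol{g}_i\}_{i\in T}$ is, and part 1 gives $\dim\mathrm{span}\{\boldsymbol{g}_i\}=\dim S=:s$. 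The assertion that any $m-1$ of the $\bw_i$ are independent is then equivalent to saying that no proper nonempty subcollection of the $\boldsymbol{g}_i$ is dependent, i.e. that every nonzero relation $\sum_i c_i\boldsymbol{g}_i=0$ has all $c_i\neq 0$. This single reformulation both forces $m\leq s+1$ and fixes the dependence structure in the boundary case $m=s+1$.

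The heart of part 2 is establishing this full-support property, and this is the step I expect to be the main obstacle. The natural strategy is to suppose a relation $\sum_{i\in J}c_i\boldsymbol{g}_i=0$ supported on a proper subset $J$, sweep a hyperplane in some direction $\bv$ until it exposes an extreme source among $\{\by_i:i\in J\}$, and extract a sign contradiction from the coefficient of that extreme complex in the relation. The difficulty is that strong endotacticity constrains the minimizer of $\bv\cdot\by$ over \emph{all} source vertices, whereas the relation only involves the sources in $J$; bridging this gap means choosing $\bv$ so that the exposed extreme source of $J$ is simultaneously a global minimizer, after which the impossibility of a downward reaction at a global minimizer (established in part 1) can be invoked. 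I would organize this using the linkage-class decomposition afforded by deficiency zero, where each linkage class is itself deficiency zero and $S=\bigoplus_{\mathcal L}S_{\mathcal L}$, so that the $\boldsymbol{g}_i$ block-decompose and the only possible relations are the at-most-one-per-linkage-class relations arising from an all-source class; strong endotacticity is then used to preclude two such independent relations coexisting. Making the choice of sweep direction robust against the geometric interleaving of distinct linkage classes is the delicate point on which the argument hinges.
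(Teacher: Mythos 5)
Your proof of part 1 is correct and is essentially the paper's own argument: choose $\bv\in S$ orthogonal to every $\bw_i$, pass to a $\bv$-extremal source vertex, and use strong endotacticity to force a strict inequality $\bv\cdot\bw_{\tilde\by}>0$ there. In fact your justification that a $\bv$-minimal source admits no strictly downward reaction (via a second application of strong endotacticity) is more careful than the paper, which simply asserts the corresponding inequalities for all targets of the extremal source.

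Part 2 is where the genuine gap lies, and you point at it yourself. Your reduction of the statement to the claim that every nonzero relation $\sum_i c_i\bw_i=0$ has full support is correct (and the passage through complex space via the map $Y$ is a legitimate, if unnecessary, reformulation). But the sweeping strategy you sketch for proving full support cannot be completed as described, for exactly the reason you identify: strong endotacticity only constrains sources that are $\bv$-minimal among \emph{all} sources, so it gives no direct leverage against a relation supported on a proper subset of the sources --- for instance one confined to a single linkage class --- and no choice of sweep direction removes this obstruction. The paper sidesteps it entirely by using a dynamical fact rather than a combinatorial one: by Corollary 8.12 of Gopalkrishnan--Miller--Shiu, a strongly endotactic mass-action system has a positive steady state $\bx_0$. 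Evaluating the mass-action vector field at $\bx_0$ gives $\sum_{j=1}^m \bx_0^{\by_j}\bw_j=0$, i.e.\ a strictly positive vector in $\ker(W)$. By part 1, $\operatorname{rank}(W)=\dim S$, and the paper reads the deficiency zero hypothesis as $m-1=\dim S$, so that $\dim\ker(W)=1$. The kernel is therefore spanned by the strictly positive vector $(\bx_0^{\by_1},\dots,\bx_0^{\by_m})$, hence every nonzero relation among the $\bw_i$ has full support --- precisely the property your sketch could not establish --- and any $m-1$ of the $\bw_i$ are linearly independent. In short, the missing idea is that the full-support kernel vector should be produced by a steady state of the dynamics (existence guaranteed by permanence of strongly endotactic systems), not constructed geometrically from the network.
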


\begin{proof}
\begin{enumerate}
\item Note that $\text{span}\{\bw_i\}_{i=1}^m\subseteq S$. For contradiction, suppose that $\text{span}\{\bw_i\}_{i=1}^m\subsetneq S$. Then there exists a vector $\bu\in S$ that is orthogonal to $\text{span}\{\bw_i\}_{i=1}^m$. Now project all the source vertices on $\bu$, i.e. consider the set $T=\{\by_j\cdot \bu\, | \, \by_j\in V_S\}$. Let  $V_{\rm proj-max}$ denote the set of  source vertices that maximizes the dot product in $T$. Since the network is strongly endotactic, there exists a reaction with source vertex from $V_{\rm proj-max}$ to a target vertex not in $V_{\rm proj-max}$. Let $\by_1 \rightarrow \by'_1$ be this reaction. This implies that $\bu\cdot (\by'_1 - \by_1) <0$. 

Note that $\bu\cdot (\by_j - \by_1) \leq 0$ for $j=1,2,...,m$. This gives us $\bu\cdot \bw_1 = \displaystyle\sum_{\by_1\to \by_i\in E}k_{1i} \bu\cdot (\by_i - \by_1)  \leq k'_{1}\bu\cdot (\by'_1 - \by_1) <0$, contradicting that $\bu$ is orthogonal to $\text{span}\{\bw_i\}_{i=1}^m$.

\item Since $(\GG,k)$ is a strongly endotactic mass-action system, it has a steady state say ${\bx_0}$~\cite{gopalkrishnan2014geometric}[Corollary 8.12]. This gives us the following equation
\begin{eqnarray}
\displaystyle\sum_{\by_j\to \by_j' \in E} k_{\by_j\to \by_j'}{\bx_0}^{\by_j}(\by_j'-\by_j) = \displaystyle\sum_{j\in [m]}{\bx_0}^{\by_j} \bw_j = 0. 
\end{eqnarray}\
Since $\delta=0$, from Lemma~\ref{lem:strongly_endotactic_prop}.\ref{lem:first_condition} we have $\delta = 0=m-1-\rm dim(S) = m-1-\rm Im(W)$. This gives $\rm{dim}(\rm{ker}(W))=1$. Note that the vector $({\bx_0}^{\by_1}, {\bx_0}^{\by_2},..., {\bx_0}^{\by_m})$ has positive coefficients. This implies that any $m-1$ reaction vectors of $W$ are linearly independent.
\end{enumerate}

\end{proof}

\begin{remark}\label{rmk:weakly_reversible}
Lemma~\ref{lem:strongly_endotactic_prop} was shown to hold for weakly reversible reaction networks in Lemma $3.9$ in~\cite{craciun2021uniqueness}.
\end{remark}

\section{Dynamical Equivalence}\label{sec:dynamical_equivalence}

Associated with every reaction network is a dynamical system. However, it is possible for two different reaction networks to generate the same dynamical system. This phenomenon is called \textbf{dynamical equivalence}. It is also referred to as \emph{macro-equivalence}~\cite{horn1972general} or \emph{confoundability}~\cite{craciun2008identifiability}. We formally define the notion of dynamical equivalence below.

\begin{definition}\label{defn:dynamical_equivalence}
Consider two mass-action systems $(\GG,\bk)$ and $(\tilde{\GG},\tilde{\bk})$. Then $(\GG,\bk)$ and $(\tilde{\GG},\tilde{\bk})$ are said to be \textbf{dynamically equivalent} if the following holds true for all $\bx\in\mathbb{R}^n_{>0}$:
\begin{eqnarray}
 \displaystyle\sum_{\by\rightarrow \by'\in E}k_{\by\rightarrow \by'}{\bx}^{\by}(\by' - \by) =  \displaystyle\sum_{\tilde{\by}\rightarrow \tilde{\by}'\in \tilde{E}}k_{\tilde{\by}\rightarrow \tilde{\by}'}{\bx}^{\tilde{\by}}(\tilde{\by}' - \tilde{\by})
\end{eqnarray}
\end{definition}

\begin{figure}[h!]
\centering
\includegraphics[scale=0.4]{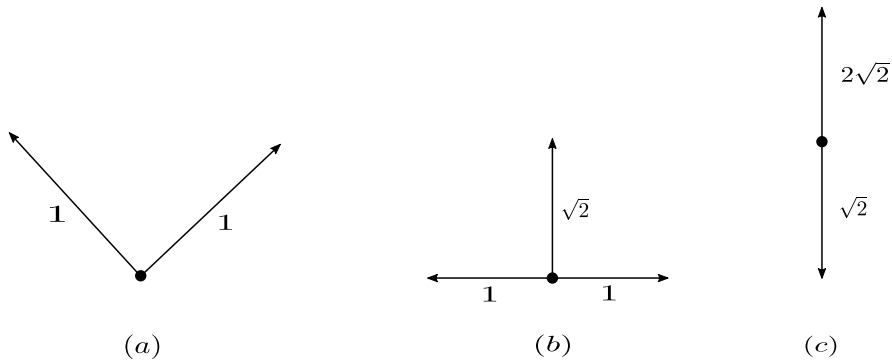}
\caption{\small The three reaction networks are dynamically equivalent, i.e., they have the same net vector given by $(0,\sqrt{2})$ corresponding to the source vertex.}
\label{fig:dynamical_equiv_networks}
\end{figure}

\begin{remark}\label{rmk:dyn_equiv_net_vectors}
It is clear from Definition~\ref{defn:dynamical_equivalence} that two mass-action systems are dynamically equivalent iff they have the same set of net reaction vectors.
\end{remark}

Figure~\ref{fig:dynamical_equiv_networks} shows a a few examples of dynamically equivalent networks.

Dynamical systems generated by reaction networks are also sometimes referred to as \textbf{realizations}. The next lemma collects a useful observation about the stoichiometric subspaces of dynamically equivalent weakly reversible reaction networks.

\begin{lemma}\label{lem:weak_stoichiometric}
Let $(\GG,\bk)$ and $(\GG',\bk')$ be two dynamically equivalent weakly reversible (strongly endotactic) realizations. Then $S(\GG) = S(\GG')$.
\end{lemma}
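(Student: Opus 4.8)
The plan is to exploit the key structural fact established in Lemma~\ref{lem:strongly_endotactic_prop}, namely that the net reaction vectors of a weakly reversible (or strongly endotactic) network span exactly its stoichiometric subspace $S$. Let $\{\bw_i\}$ and $\{\bw'_j\}$ denote the net reaction vectors of $(\GG,\bk)$ and $(\GG',\bk')$ respectively. By that lemma (together with Remark~\ref{rmk:weakly_reversible}, which extends it to the weakly reversible case), we have $S(\GG)=\text{span}\{\bw_i\}$ and $S(\GG')=\text{span}\{\bw'_j\}$. The entire problem therefore reduces to showing that these two spans coincide.

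The bridge between the two spans is dynamical equivalence. By Remark~\ref{rmk:dyn_equiv_net_vectors}, two mass-action systems are dynamically equivalent if and only if they have the same set of net reaction vectors. First I would make precise what ``same set of net vectors'' means when the two networks may have different source-vertex sets: for each source vertex $\by$ appearing in either network, the net vector attached to the monomial $\bx^{\by}$ must agree on both sides, since distinct monomials $\bx^{\by}$ are linearly independent as functions on $\RR^n_{>0}$. Thus for a given exponent $\by$, either both networks contribute the same net vector, or one contributes zero (i.e. $\by$ is absent or is a ghost vertex there). In all cases the collections of nonzero net vectors, viewed as the coefficient vectors of the shared monomials, must match term by term. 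Consequently $\text{span}\{\bw_i\}=\text{span}\{\bw'_j\}$, which gives $S(\GG)=S(\GG')$.

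The main obstacle, and the step deserving the most care, is the monomial-matching argument: dynamical equivalence is the identity of two vector-valued functions of $\bx$, and I must convert this functional identity into an equality of coefficient vectors indexed by source vertices. The clean way is to invoke linear independence of the distinct monomials $\{\bx^{\by} : \by \in V_{\mathcal{S}}\cup V'_{\mathcal{S}}\}$ over $\RR^n_{>0}$, so that matching the two sides forces the coefficient of each $\bx^{\by}$---which is precisely the net vector at $\by$---to be equal in the two systems. Once this is in hand, the nonzero net vectors of $\GG$ and of $\GG'$ are literally the same multiset, so their spans are identical and the conclusion follows from Lemma~\ref{lem:strongly_endotactic_prop}. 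The parenthetical strongly endotactic case is handled identically, since Lemma~\ref{lem:strongly_endotactic_prop} is stated for strongly endotactic systems and applies verbatim.
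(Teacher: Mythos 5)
Your proposal is correct and follows essentially the same route as the paper: invoke Lemma~\ref{lem:strongly_endotactic_prop} (with Remark~\ref{rmk:weakly_reversible} for the weakly reversible case) to identify each stoichiometric subspace with the span of its net reaction vectors, then use dynamical equivalence (Remark~\ref{rmk:dyn_equiv_net_vectors}) to conclude the two collections of nonzero net vectors coincide. Your explicit justification of the coefficient-matching step via linear independence of distinct monomials on $\RR^n_{>0}$, with ghost vertices contributing zero, is a more careful spelling-out of what the paper states in one line (``same set of net reaction vectors, neglecting the ghost vertices''), not a different argument.
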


\begin{proof}

Since $(\GG,\bk)$ and $(\GG',\bk')$ are two dynamically equivalent weakly reversible (strongly endotactic) realizations, they have the same set of net reaction vectors (neglecting the ghost vertices). Since both these realizations are weakly reversible (strongly endotactic), by Lemma~\ref{lem:strongly_endotactic_prop} or Remark~\ref{rmk:weakly_reversible}, we get that $\text{span}\{\bw_i\}_{i=1}^m=S$ and the result follows.
\end{proof}

\begin{remark}
Note that two dynamically equivalent weakly reaction networks with the same deficiency may not have the same number of vertices or linkage classes. Figure~\ref{fig:ghost_vertex_deficiency} illustrates this point. 
\end{remark}

\begin{figure}[h!]
\centering
\includegraphics[scale=0.45]{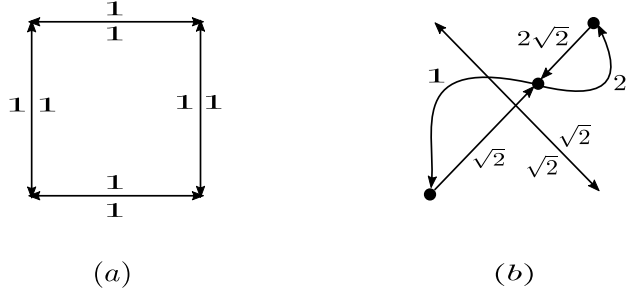}
\caption{\small (a) This network is weakly reversible, has a deficiency of one, consists of a single linkage class and contains four vertices. (b) This network is weakly reversible, has a deficiency of one, consists of two linkage classes, contains five vertices and is dynamically equivalent to the network in (a). The network in (b) possesses a ghost vertex labelled as $Q$.  }
\label{fig:ghost_vertex_deficiency}
\end{figure}

\section{Source-only realizations}\label{sec:source_only}

Network identification is a challenging problem in general due to the phenomenon of dynamical equivalence. However, certain properties of the network like complex balancing, weak reversibility are often required to design efficient systems~\cite{liptak2016kinetic,szederkenyi2013optimization}. There exists a large body of work that design algorithms for detecting properties like weak reversibility, complex balancing or for finding dense realization, i.e., realizations that have the maximum number of reactions. Many of these approaches use techniques like linear and mixed linear integer programming to perform a constrained optimization~\cite{szederkenyi2013optimization,rudan2014efficiently,rudan2014polynomial,rudan2013computing}. In such approaches, the set of source vertices is part of the input search space over which the optimization is performed. Previous works~\cite{szederkenyi2011finding,szederkenyiweak2011finding,craciun2020efficient} have shown that to deduce whether a detailed balanced, complex balanced or weakly reversible realizations exists, it suffices to check over the space of input source vertices. In our paper, we show that this property extends to \emph{endotactic, strongly endotactic, consistent and conservative networks}. Theorem~\ref{thm:main_source_only} illustrates this point.

Before we move to Theorem~\ref{thm:main_source_only}, we recall the following lemma from linear algebra which will be used in the context of \emph{consistent} networks.

\begin{lemma}\label{lem:stiemke}(Steimke's theorem)
Let $\bv_1,\bv_2,...,\bv_k \in\mathbb{R}^n$. Then exactly one the following is true
\begin{enumerate}
\item There exists constants $\lambda_1,\lambda_2,..., \lambda_k \in\mathbb{R}_{>0}$ such that $\displaystyle\sum_{i=1}^k \lambda_i \bv_i = 0$.

\item There exists a vector $\bw\in\mathbb{R}^n$ such that $\bw\cdot \bv_i\leq 0$ for all $i\in\{1,2,...,k\}$ and $\bw\cdot \bv_j < 0$ for some $j\in\{1,2,...,k\}$.
\end{enumerate}
\end{lemma}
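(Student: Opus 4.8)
The statement is a theorem of the alternative, so the plan is to prove two things separately: that the two alternatives cannot hold simultaneously, and that at least one of them must hold. The first part (mutual exclusivity) is immediate, and I would dispatch it first. Suppose both (1) and (2) held, witnessed by $\lambda_1,\dots,\lambda_k>0$ with $\sum_i \lambda_i \bv_i = 0$ and a vector $\bw$ with $\bw\cdot\bv_i\le 0$ for all $i$ and $\bw\cdot\bv_j<0$ for some $j$. Pairing $\bw$ against the zero combination gives $0 = \bw\cdot\sum_i \lambda_i\bv_i = \sum_i \lambda_i(\bw\cdot\bv_i)$; but every term is $\le 0$ and the $j$-th term is $<0$ (since $\lambda_j>0$), so the sum is strictly negative, a contradiction.

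For the substantive direction I would pass to the coefficient space $\mathbb{R}^k$. Let $A$ be the $n\times k$ matrix whose columns are $\bv_1,\dots,\bv_k$, and set $L=\mathrm{Im}(A^{\top})\subseteq\mathbb{R}^k$, so that $\ker A = L^{\perp}$. Under this translation, alternative (1) asserts that $L^{\perp}$ contains a strictly positive vector (a $\lambda>0$ with $A\lambda=0$), while alternative (2) asserts that $L$ contains a nonzero vector with all coordinates $\le 0$ — equivalently, since $L$ is a subspace and hence symmetric under negation, a nonzero vector with all coordinates $\ge 0$. I would then assume (2) fails, i.e.\ $L\cap\mathbb{R}^k_{\ge 0}=\{0\}$, and produce the witness for (1). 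Because the probability simplex $\Delta=\{q\in\mathbb{R}^k_{\ge 0}:\sum_i q_i=1\}$ is compact and convex and disjoint from the (closed) subspace $L$, the strong separating hyperplane theorem yields a $p\in\mathbb{R}^k$ with $\sup_{\ell\in L} p\cdot\ell < \inf_{q\in\Delta} p\cdot q$. Since $L$ is a subspace, the left-hand supremum is finite only if $p\cdot\ell=0$ for all $\ell\in L$, so in fact $p\in L^{\perp}$ and $\inf_{q\in\Delta} p\cdot q>0$. Evaluating at each vertex $e_i\in\Delta$ gives $p_i = p\cdot e_i>0$, so $p$ is a strictly positive vector in $L^{\perp}$, which is exactly alternative (1).

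The main obstacle is the separation step, and in particular obtaining a strict rather than merely weak separation: this is precisely where the distinction between ``$0$ lies in the convex hull of the $\bv_i$'' and ``$0$ is a strictly positive combination of the $\bv_i$'' enters, and it is the reason the failure of (2) has to be packaged as disjointness of $L$ from the compact simplex $\Delta$ rather than from the entire nonnegative orthant (which $L$ always meets at the origin). Once strict separation is in hand, the subspace structure of $L$ does the remaining work automatically, forcing the normal vector into $L^{\perp}$ and making it positive on every coordinate.

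I would remark that one could instead deduce the claim from Gordan's or Farkas' theorem of the alternative, but the direct convex-separation route above is self-contained and keeps the positivity bookkeeping transparent, which is what matters for its later use with \emph{consistent} networks.
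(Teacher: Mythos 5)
Your proposal is correct, but there is nothing in the paper to compare it against: the paper does not prove this lemma at all. It is stated as a recalled fact (``we recall the following lemma from linear algebra'') under the name Stiemke's theorem, and is then used as a black box in the proof of Theorem~\ref{thm:consistent}; no proof environment follows the statement. Your argument therefore supplies what the paper delegates to the literature, and it does so correctly. The exclusivity half is the standard pairing argument and is airtight. For the existence half, your reduction to coefficient space is right: with $A$ the matrix whose columns are $\bv_1,\dots,\bv_k$ and $L=\mathrm{Im}(A^{\top})$, one has $\ker A=L^{\perp}$, alternative (1) is ``$L^{\perp}$ meets the open positive orthant,'' and (since $L=-L$) the failure of alternative (2) is exactly $L\cap\mathbb{R}^k_{\ge 0}=\{0\}$. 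Separating the compact simplex $\Delta$ strongly from the closed subspace $L$, observing that finiteness of $\sup_{\ell\in L}p\cdot\ell$ forces $p\in L^{\perp}$, and then testing against the vertices $e_i$ to get $p_i>0$ is precisely one of the standard proofs of Stiemke's lemma, and you correctly isolate the one genuine subtlety: strict separation is only available against $\Delta$, not against the full orthant (which always meets $L$ at the origin). The only cosmetic point worth noting is that your proof, unlike a citation, makes visible why strict positivity of the $\lambda_i$ (rather than mere nonnegativity, as in Gordan's theorem) is what trades off against the weak inequalities with one strict inequality in alternative (2) --- which is exactly the form needed for the consistency application in Theorem~\ref{thm:consistent}.
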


\begin{theorem}\label{thm:consistent}
Let $\GG=(V,E)$ be a reaction network such that $\rm{Cone}\{(\by'-\by)\, |\, \by\to \by'\in E\} = \rm{Span}\{(\by'-\by) \, |\, \by\to \by'\in E \}$. Then $\GG$ is consistent.
\end{theorem}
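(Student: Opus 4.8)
The plan is to apply Stiemke's theorem (Lemma~\ref{lem:stiemke}) to the family of reaction vectors $\bv_i := \by_i' - \by_i$ indexed over the reactions $\by_i \to \by_i' \in E$, say $i \in \{1,\dots,k\}$. The first observation is that the defining condition for $\GG$ being \emph{consistent} is precisely alternative~1 in Lemma~\ref{lem:stiemke}, namely the existence of strictly positive $\lambda_i$ with $\sum_i \lambda_i \bv_i = 0$. Since Stiemke's theorem guarantees that exactly one of its two alternatives holds, it suffices to rule out alternative~2: to show that there is no $\bw \in \RR^n$ with $\bw \cdot \bv_i \le 0$ for all $i$ and $\bw \cdot \bv_j < 0$ for some $j$.

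First I would argue by contradiction and suppose such a $\bw$ exists. The hypothesis $\rm{Cone}\{\bv_i\} = \rm{Span}\{\bv_i\}$ says that the cone generated by the reaction vectors is in fact a linear subspace, and in particular it is closed under negation. Hence the offending vector $\bv_j$, for which $\bw\cdot \bv_j<0$, has its negative $-\bv_j$ lying in the cone as well, so we may write $-\bv_j = \sum_i \mu_i \bv_i$ with all $\mu_i \ge 0$. Taking the inner product with $\bw$ then yields a contradiction: on the one hand $\bw\cdot(-\bv_j) = \sum_i \mu_i (\bw\cdot \bv_i) \le 0$, since each $\mu_i\ge 0$ and each $\bw\cdot\bv_i\le 0$; on the other hand $\bw\cdot(-\bv_j) = -(\bw\cdot \bv_j) > 0$, since $\bw\cdot\bv_j<0$. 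This is impossible, so alternative~2 cannot hold, and therefore alternative~1 holds, producing the positive coefficients $\lambda_i$ that witness consistency.

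The only conceptual step, and the one I would take care to state cleanly, is the translation of the hypothesis $\rm{Cone} = \rm{Span}$ into the symmetry property that $-\bv_j$ lies in the cone; everything else is a routine sign-chasing argument through Stiemke's alternative. I do not expect a genuine obstacle here: the entire content is in recognizing that the equality of cone and span is exactly the ingredient needed to defeat the second alternative of Stiemke's theorem and close the loop.
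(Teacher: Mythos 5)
Your proposal is correct and is essentially identical to the paper's proof: both invoke Stiemke's theorem, observe that the hypothesis $\mathrm{Cone} = \mathrm{Span}$ forces the negative of the offending reaction vector $\bv_j$ to lie in the cone, and derive the contradiction $\bw\cdot(-\bv_j) \le 0$ versus $\bw\cdot(-\bv_j) > 0$. The only cosmetic difference is that you spell out the conic combination $-\bv_j = \sum_i \mu_i \bv_i$ explicitly, whereas the paper simply notes that $\bw\cdot\bv \le 0$ holds for every $\bv$ in the cone.
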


\begin{proof}
For contradiction, assume that $\GG$ is not consistent. By Lemma~\ref{lem:stiemke}, there exists a vector $\bw\in\mathbb{R}^n$ such that $\bw\cdot (\by'-\by) \leq 0$ for all $\by\to \by'\in E$ and  $\bw\cdot (\tilde{\by}'-\tilde{\by}) < 0$ for some $\tilde{\by}\to \tilde{\by}'\in E$. Note that $\bw\cdot \bv \leq 0$ for any $\bv\in \rm{Cone}\{(\by'-\by)\, |\, \by\to \by'\in E\}$. Since $\tilde{\by}- \tilde{\by}' \in \rm{Span}\{(\by'-\by) \, |\, \by\to \by'\in E \} = \rm{Cone}\{(\by'-\by)\, |\, \by\to \by'\in E\}$, we have $\bw \cdot (\tilde{\by}- \tilde{\by}') \leq 0$ or equivalently $\bw \cdot (\tilde{\by}'- \tilde{\by}) \geq 0$, a contradiction. 

\end{proof}

\begin{remark}\label{rmk:procedure}
Using~\cite{craciun2021uniqueness} 4.8 and 4.12], we get the following: Consider a dynamical system $\dot{\bx} = f(\bx)$ generated by a weakly reversible deficiency zero network. Then the exponents of the monomials in $f(\bx)$ are the same as the vertices of the network. In particular,~\cite{craciun2021uniqueness}[Theorems 4.8 and 4.12] gives a procedure to decrease both deficiency and number of vertices of the reaction network by one, whilst maintaining dynamical equivalence. 
\end{remark}

\begin{theorem}\label{thm:main_source_only}
Choose a property $P$ from among the following:
\renewcommand{\theenumi}{(\roman{enumi})}%
\begin{enumerate}

\item Strongly endotactic.

\item Endotactic.

\item Consistent.

\item Conservative.

\end{enumerate}

Consider a mass-action system $\GG_k$. Then,  $\GG_k$ is dynamically equivalent to a mass-action system having property $P$ if and only if it is dynamically equivalent to a  mass-action system $\tilde{\GG}_{\tilde{k}}$ having property $P$ that uses only the source vertices of $\GG_{\tilde{k}}$.

\end{theorem}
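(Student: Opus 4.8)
The plan is to separate the two implications. The ``if'' direction is immediate, since a system with property $P$ that uses only the source vertices of $\GG$ is in particular a system with property $P$; hence any realization I construct below automatically yields the reverse implication. All the content lies in the ``only if'' direction: starting from a $P$-realization $(\GG',\bk')$ that is dynamically equivalent to $\GG_{\bk}$, I must build a $P$-realization $(\tilde{\GG},\tilde{\bk})$, dynamically equivalent to $\GG_{\bk}$, all of whose source vertices are source vertices of $\GG$.

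The first step, shared by all four cases, records what dynamical equivalence forces. By Remark~\ref{rmk:dyn_equiv_net_vectors} the systems $(\GG',\bk')$ and $\GG_{\bk}$ carry the same net vectors, and since the net vector at a source $\by$ is the coefficient of the monomial $\bx^{\by}$ in Equation~\eqref{eq:mass_action}, linear independence of distinct monomials forces the sources carrying a nonzero net vector, together with those net vectors, to coincide for the two systems. Write $V^{+}$ for this common set of non-ghost sources and $\{\bw_{\by}\}_{\by\in V^{+}}$ for the corresponding net vectors; the only source vertices of $\GG'$ that need not be source vertices of $\GG$ are ghost vertices. The canonical candidate I would use is the \emph{single-reaction realization} $\tilde{\GG}$ with source set $V^{+}$ and one reaction $\by\to\by+\bw_{\by}$ per source, which is visibly dynamically equivalent to $\GG_{\bk}$ and uses only sources of $\GG$; its reaction vectors are exactly the $\bw_{\by}$, so $S(\tilde{\GG})=\mathrm{span}\{\bw_{\by}\}\subseteq S(\GG')$.

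For the conservative case this already suffices: a positive vector orthogonal to $S(\GG')$ exists because $\GG'$ is conservative, and it remains orthogonal to the smaller space $S(\tilde{\GG})$, so $\tilde{\GG}$ is conservative. For the consistent case the single-reaction realization need not work on its own, so I would augment it: pick $\by_0\in V^{+}$ and a basis $\{\bv_i\}$ of $\mathrm{span}\{\bw_{\by}\}$ and adjoin to $\by_0$ the balanced pairs $\by_0\to\by_0+\bv_i$ and $\by_0\to\by_0-\bv_i$. These have net vector zero, so both dynamical equivalence and the source set are unchanged, while now the cone generated by all reaction vectors equals their span; Theorem~\ref{thm:consistent} then certifies that the augmented network is consistent.

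The endotactic and strongly endotactic cases are the crux, because these properties see individual reaction directions and the convex geometry of the \emph{full} source set of $\GG'$ (ghosts included), not merely the net vectors. My plan is to reduce each to a statement purely about the net vectors of the real sources, namely: for every $\bv$, among the sources in $V^{+}$ with $\bv\cdot\bw_{\by}\neq 0$, the lowest one(s) in the direction $\bv$ satisfy $\bv\cdot\bw_{\by}>0$. Granting this, the single-reaction realization passes the (strong) parallel sweep test in every direction and is therefore (strongly) endotactic. To prove the net-vector statement I would argue by a descent: if some lowest such source $\by$ had $\bv\cdot\bw_{\by}<0$, then $\by$ carries a reaction in $\GG'$ with $\bv\cdot(\by'-\by)<0$, and endotacticity of $\GG'$ produces an upward reaction from a strictly lower source; following the chain of downward reactions this forces, the strictly decreasing values of $\bv\cdot\by$ over the finite source set must terminate at a source possessing an upward but no downward reaction, hence with positive net projection. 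That terminal source is a \emph{real} source lying strictly below $\by$, contradicting the minimality of $\by$; in the strongly endotactic case the witness sits directly at the global minimum, so the descent is immediate. The main obstacle is exactly this passage from the reaction-level definition of (strong) endotacticity to the net-vector statement, i.e.\ showing that ghost vertices can never be the structures that witness endotacticity at the bottom in any direction.
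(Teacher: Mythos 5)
Your ``if'' direction, your common reduction to net vectors, and your conservative and consistent cases are all correct; the latter two essentially reproduce the paper's own arguments (conservativity survives shrinking the stoichiometric subspace, and one adds cancelling reaction pairs so that Theorem~\ref{thm:consistent} applies). Your endotactic case is also correct, and by a genuinely different route from the paper: the paper argues a contrapositive (adding a ghost vertex, whether inside or outside the source hull, cannot create endotacticity), while you construct the explicit single-reaction realization $\by\to\by+\bw_{\by}$ on $V^{+}$ and prove it endotactic by a finite descent through the ghost vertices of $\GG'$; the descent is sound because the terminal source has an upward but no downward reaction, hence positive net projection, hence is a non-ghost in $V^{+}$ strictly below $\by$.

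The strongly endotactic case, however, has a genuine gap: your intermediate net-vector statement is too weak to deliver strong endotacticity of the single-reaction realization, so the claim ``granting this, the single-reaction realization passes the (strong) parallel sweep test'' fails. Strong endotacticity demands a witness that is $\bv$-minimal among \emph{all} sources of $\tilde{\GG}$, i.e.\ among all of $V^{+}$, whereas your statement only controls minimality among the sources with $\bv\cdot\bw_{\by}\neq 0$; members of $V^{+}$ whose net vector is $\bv$-orthogonal can sit strictly lower and block the test. Concretely, let $V^{+}=\{(0,0),(0,1),(1,0),(2,0)\}$ with net vectors $(0,1)$, $(0,-1)$, $(1,0)$, $(-1,0)$ respectively, so that the single-reaction realization is $(0,0)\rightleftharpoons(0,1)$ together with $(1,0)\rightleftharpoons(2,0)$. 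Your statement holds for every $\bv$ (for each $\bv$, the lower member of each reversible pair has positive projection, so the lowest nonzero-projection source always has positive projection), yet for $\bv=(1,0)$ the reaction $(2,0)\to(1,0)$ has negative projection while the $\bv$-minimal sources $(0,0)$ and $(0,1)$ carry only zero-projection reactions: this realization is endotactic but not strongly endotactic. (This does not contradict the theorem --- that system admits no strongly endotactic realization at all --- it refutes only your reduction.) The statement you actually need is: whenever some $\by\in V^{+}$ has $\bv\cdot\bw_{\by}<0$, some source at the global $\bv$-minimum of $V^{+}$ has positive net projection. This does follow from strong endotacticity of $\GG'$, by an argument your descent nearly contains: the witness in $\GG'$ must lie at the global minimum over \emph{all} sources of $\GG'$, ghosts included; any source there can have no downward reaction (such a reaction would need a witness strictly lower, which cannot exist), so a ghost at the global minimum has only zero-projection reactions and cannot be the witness; hence the witness is a non-ghost, it lies in $V^{+}$, and the global minimum over all sources coincides with that over $V^{+}$. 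With the intermediate statement corrected in this way, your construction goes through.
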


\begin{proof}

\renewcommand{\theenumi}{(\roman{enumi})}%
\begin{enumerate}

\item We will first prove when the property $P$ is strongly endotactic. We will prove the contrapositive, i.e., if we have a mass-action system $\GG_k$ that is not strongly endotactic, then adding a ghost source vertex $\tilde{\by}$ to this network still implies that the resultant network is not strongly endotactic. If $\tilde{\by}$ lies inside the convex hull of the source vertices of $\GG_k$, then the resultant network is not strongly endotactic since the network $\GG_k$ was not strongly endotactic to begin with.  Note that if $\tilde{\by}$ lies outside the convex hull of the source vertices of $\GG_k$, then $\tilde{\by}$ lies on the boundary of the convex hull of the source vertices of the resultant network. Since $\tilde{y}$ is a ghost vertex, the resultant network is not strongly endotactic.

\item The proof when property $P$ is endotactic is exactly same as the proof when  property $P$ is strongly endotactic.

\item We will prove when the property $P$ is consistent. Keep on adding reaction vectors (oppositely oriented two at a time) to $\GG$ so that $\rm{Cone}\{(\by'-\by)\, |\, \by\to \by'\in E\} = \rm{Span}\{(\by'-\by) \, |\, \by\to \by'\in E \}$. By Theorem~\ref{thm:consistent}, we get that $\GG$ is consistent.

\item We will prove when the property $P$ is conservative. Since $\GG$ is conservative, there exists a positive vector $\bv$ that is orthogonal to the stoichiometric subspace $S$. Removing the ghost vertex generates a network with stoichiometric subspace $S'\subseteq S$. Therefore, we still have the positive vector $v$ that is orthogonal to $S'$ , implying that it is conservative. 

\end{enumerate}

\end{proof}

\section{Uniqueness/Non-uniqueness of realizations}\label{sec:uniqueness}

As remarked before, realizations of reaction networks may not be unique due to the property of dynamical equivalence. However, under certain restrictions on the reaction network structure, one can show that the realization is unique. For example in~\cite{craciun2021uniqueness}, Craciun et. al. showed that weakly reversible deficiency one realizations are unique. Weakly reversible deficiency zero networks form an important class of reaction networks given the rich dynamics that they exhibit. Weakly reversible deficiency zero reaction networks are known to be complex balanced for all values of the rate constants~\cite{horn1972necessary}. The property of being complex balanced implies that there exists a Lyapunov function that decreases along trajectories; further there exists a unique equilibria in each stoichiometric compatibility class which is locally asymptotically stable~\cite{horn1972general}. In the language of Feinberg~\cite{feinberg2019foundations}, the dynamical system is \emph{quasi-thermodynamic}.

\begin{theorem}[Deficiency zero theorem]\cite{horn1972necessary}
A mass-action system $(\GG,k)$ is complex balanced for all values of $\bk$ iff it is weakly reversible and has a deficiency of zero.
\end{theorem}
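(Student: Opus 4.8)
The plan is to recast the mass-action dynamics in the standard Horn--Jackson form $\dot{\bx} = Y A_{\bk}\Psi(\bx)$, where $Y$ is the $n\times m$ matrix whose columns are the $m=|V|$ complexes $\by_1,\dots,\by_m$, the map $\Psi(\bx)=(\bx^{\by_1},\dots,\bx^{\by_m})$ is the monomial vector, and $A_{\bk}$ is the graph-Laplacian-type matrix with off-diagonal entry $(A_{\bk})_{ji}=k_{\by_i\to\by_j}$ and zero column sums. In this language the complex-balancing condition at $\bx_0$ is precisely the \emph{linear} statement $A_{\bk}\Psi(\bx_0)=0$, so the whole theorem reduces to understanding when $A_{\bk}$ has a strictly positive kernel vector that is simultaneously a moment vector $\Psi(\bx_0)$.

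For the forward direction (weakly reversible and deficiency zero $\Rightarrow$ complex balanced for all $\bk$) I would first invoke the Matrix--Tree theorem on each strongly connected linkage class to produce a strictly positive $\kappa=\kappa(\bk)\in\ker A_{\bk}$, with $\ker A_{\bk}$ spanned blockwise by the per-linkage-class positive vectors. It then remains to find $\bx_0$ with $\Psi(\bx_0)$ proportional, block by block, to $\kappa$; taking logarithms, this asks for $\ln\kappa$ to lie in the subspace $U:=\mathrm{Im}(Y^{\mathsf T})+\mathrm{span}\{\mathbf{1}_{L_1},\dots,\mathbf{1}_{L_\ell}\}$. The key computation is $U^{\perp}=\ker(Y)\cap\mathrm{Im}(\partial_E)$, where $\partial_E$ is the incidence map $e_{\by_i\to\by_j}\mapsto e_j-e_i$; since each linkage class is strongly connected, $\mathrm{Im}(\partial_E)$ is exactly the sum-zero subspace on each class, and a dimension count gives $\dim U^{\perp}=(m-\ell)-\dim S=\delta$. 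Hence $\delta=0$ forces $U=\mathbb{R}^m$, so $\ln\kappa\in U$ for every $\bk$ and a complex-balancing $\bx_0$ always exists.

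For the converse I would argue the two conclusions separately. Weak reversibility comes from the existence of a strictly positive $\Psi(\bx_0)\in\ker A_{\bk}$: summing the balance equations over a terminal strongly connected component $T$ of the condensation shows the total inflow into $T$ from outside vanishes, forcing all such rates to be zero; since every complex reaches some terminal component, every complex must itself lie in one, which is weak reversibility. For deficiency zero I would reuse the subspace $U$: if $\delta>0$ then $U\neq\mathbb{R}^m$, so there is $0\neq v\in U^{\perp}=\ker(Y)\cap\mathrm{Im}(\partial_E)$ with $v\cdot\ln\kappa(\bk)=0$ for all $\bk$; because $v$ has zero sum on each linkage class, this value depends only on the within-class log-ratios of the Matrix--Tree vector, and I would exhibit rate constants making it nonzero, yielding the desired contradiction.

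The main obstacle is this last step: showing that the within-class variation of the Matrix--Tree kernel vector is rich enough to violate $v\cdot\ln\kappa(\bk)=0$. Concretely one must check that, as $\bk$ ranges over $\mathbb{R}^E_{>0}$, the map $\bk\mapsto\ln\kappa(\bk)\ \mathrm{mod}\ \mathrm{span}\{\mathbf{1}_{L_j}\}$ has image spanning all of $\mathrm{Im}(\partial_E)$; this can be established by differentiating the spanning-in-tree polynomials of the Matrix--Tree theorem with respect to $\ln k_{\by_i\to\by_j}$ and verifying the Jacobian has full rank on each strongly connected class (equivalently, by appealing to the toric structure of the complex-balancing equations). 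Everything else is bookkeeping with the incidence map and the deficiency identity $\delta=\dim(\ker Y\cap\mathrm{Im}\,\partial_E)$.
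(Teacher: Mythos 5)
The paper itself offers no proof of this statement: it is quoted as a classical result, with the citation to Horn (1972) standing in for the argument. So the comparison must be with the standard literature proof, and that is essentially what you reconstruct. Your framework is correct throughout: writing the dynamics as $\dot{\bx}=YA_{\bk}\Psi(\bx)$, identifying complex balancing with $A_{\bk}\Psi(\bx_0)=0$, introducing $U=\mathrm{Im}(Y^{\mathsf T})+\mathrm{span}\{\mathbf{1}_{L_1},\dots,\mathbf{1}_{L_\ell}\}$, and establishing $U^{\perp}=\ker Y\cap\mathrm{Im}\,\partial_E$ with $\dim U^{\perp}=\delta$. The forward direction (weakly reversible and $\delta=0$ implies complex balanced for all $\bk$) is complete as outlined, and your terminal-strong-component argument for the weak reversibility half of the converse is sound.

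The one genuine gap is exactly the step you flag: producing, when $\delta>0$, a choice of $\bk$ with $v\cdot\ln\kappa(\bk)\neq 0$. Your proposed repair (full-rank Jacobian of the log Matrix--Tree vector, or an appeal to ``toric structure'') is plausible but unexecuted, and it is much heavier than necessary. The standard way to close this avoids the Matrix--Tree theorem entirely: for a weakly reversible network, \emph{every} strictly positive vector $\psi\in\mathbb{R}^m_{>0}$ lies in $\ker A_{\bk}$ for a suitable $\bk$. Indeed, in a strongly connected component every edge lies on a directed cycle; summing one unit cycle-flow per edge yields a circulation $f$ that is strictly positive on every edge, and setting $k_{\by_i\to\by_j}=f_{\by_i\to\by_j}/\psi_i$ makes inflow equal outflow at each vertex, i.e.\ $A_{\bk}\psi=0$. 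Now if the system is complex balanced for this $\bk$, then $\psi$ and $\Psi(\bx_0)$ are both strictly positive kernel vectors, so their logarithms differ by an element of $\mathrm{span}\{\mathbf{1}_{L_j}\}$, forcing $\ln\psi\in U$. Since $\psi$ was an arbitrary positive vector, $U=\mathbb{R}^m$, hence $\delta=\dim U^{\perp}=0$. Substituting this circulation argument for your Jacobian computation turns your outline into a complete and correct proof; in particular, it shows that the image of your map $\bk\mapsto\ln\kappa(\bk)\ \mathrm{mod}\ \mathrm{span}\{\mathbf{1}_{L_j}\}$ is not merely spanning but is all of $\mathrm{Im}\,\partial_E$.
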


Note that it follows from the deficiency zero theorem that for a deficiency zero network that is not weakly reversible, there exists no positive equilibrium for all choices of $\bk$. In other words, a consistent deficiency zero reaction network is weakly reversible. Since weakly reversible deficiency zero realizations are unique~\cite{craciun2021uniqueness}, this implies that consistent deficiency zero realizations are unique.  \\

A natural extension is to ask the following question: ``Are weakly reversible deficiency one realizations unique?'' We show that the answer to this question is no in general, i.e., weakly reversible deficiency one realizations are not unique.

A network with deficiency one can be classified into the following three types:
\renewcommand{\theenumi}{(\roman{enumi})}%
\begin{enumerate}

\item\label{case:1} All linkages classes of deficiency zero.

\item One linkage of deficiency one.

\item\label{case:3} One linkage of deficiency one and the remaining linkage classes of deficiency zero.

\end{enumerate}

It is interesting to see the interplay of dynamical equivalence between the above mentioned cases in the context of weakly reversible deficiency one networks; and also the relationship between the dynamics of weakly reversible deficiency zero and deficiency one networks. \\ 

Figure~\ref{fig:zero_zero} shows an example of two dynamically equivalent weakly reversible networks that have deficiency one and each of them consists of two linkage classes of deficiency zero.

\begin{figure}[h!]
\centering
\includegraphics[scale=0.35]{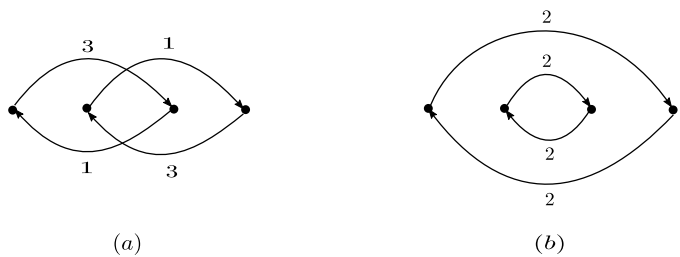}
\caption{\small This figure gives an example of two distinct weakly reversible reaction networks having a deficiency of one and consisting of two linkage classes each of deficiency zero that are dynamically equivalent. }
\label{fig:zero_zero}
\end{figure} 

Figure~\ref{fig:strongly_endotactic} shows an example of two dynamically equivalent weakly reversible networks that have deficiency one and each of them consists of a single linkage class.

\begin{figure}[h!]
\centering
\includegraphics[scale=0.32]{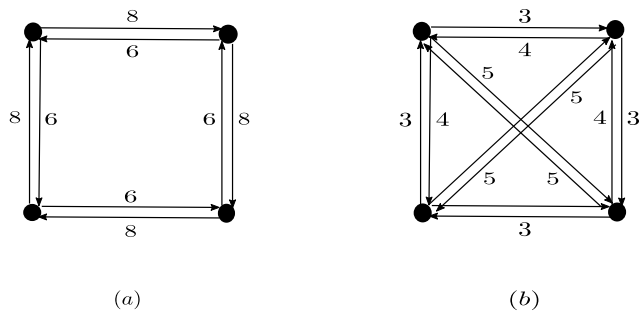}
\caption{\small This figure gives an example of two distinct weakly reversible reaction networks having a deficiency of one and consisting of a single linkage class that are dynamically equivalent.}
\label{fig:strongly_endotactic}
\end{figure} 

One can also generate examples of reaction networks consisting of different number of linkage classes. Figure~\ref{fig:linkage_one_two}.(a) shows a weakly reversible network of deficiency zero having two linkage classes. This is dynamically equivalent to the network in Figure~\ref{fig:linkage_one_two}.(b) which is a weakly reversible network of deficiency one having one linkage class.

\begin{figure}[h]
\centering
\includegraphics[scale=0.32]{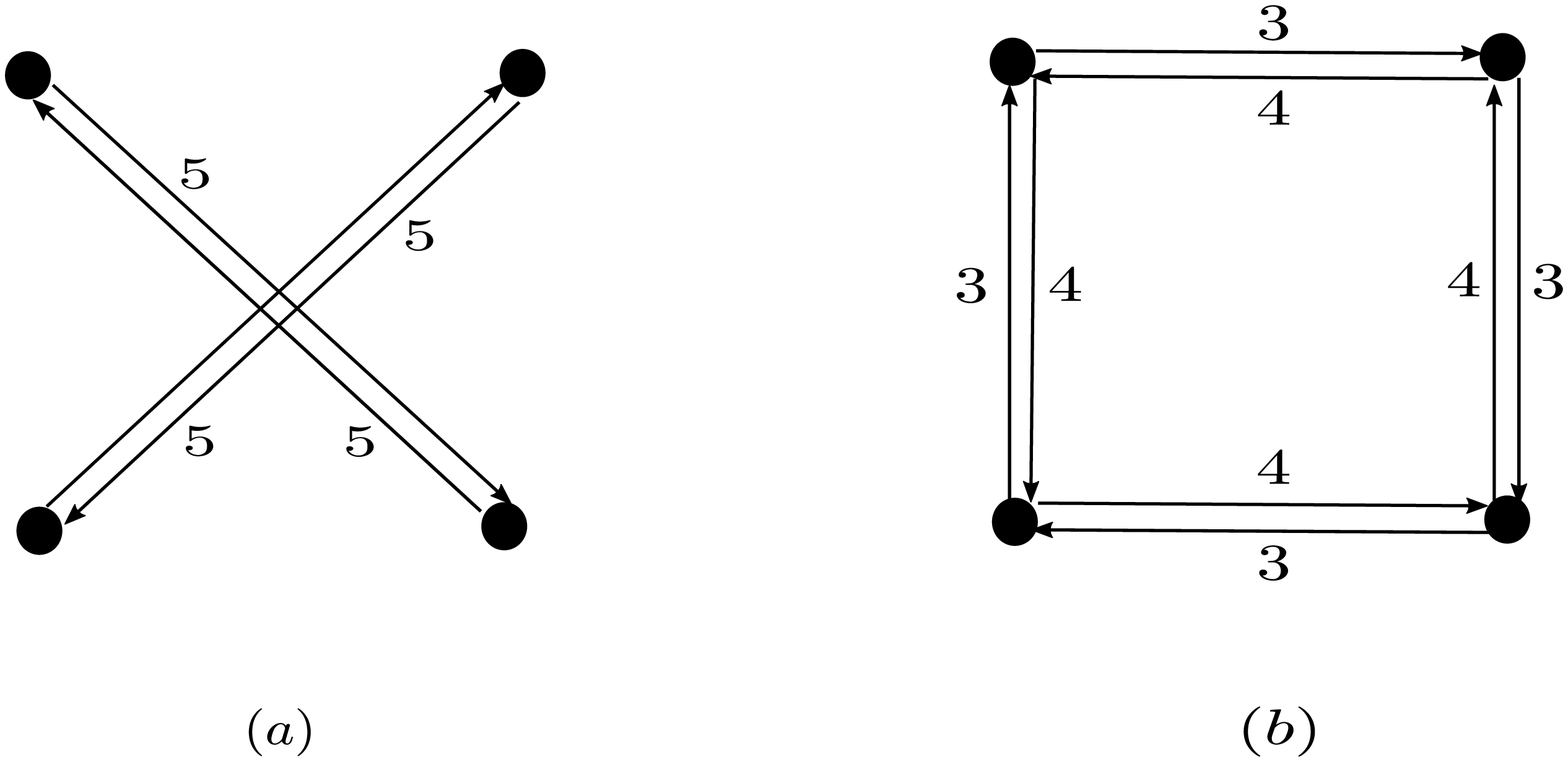}
\caption{\small The figure in (a) is a weakly reversible deficiency zero network consisting of two linkage classes that is dynamically equivalent to the network in figure (b) which is a weakly reversible deficiency one network consisting of a single linkage class.}
\label{fig:linkage_one_two}
\end{figure}

In Theorem~\ref{thm:zero_dyn_one}, we show that two reaction networks having a deficiency of one (one reaction network whose linkage classes have deficiency zero and another reaction network whose linkage classes all of deficiency zero except one which has deficiency one) and same number of linkages cannot be dynamically equivalent.  Towards this, we need a result from~\cite{craciun2021uniqueness} as the next remark states.

\begin{remark}\label{rmk:stoichiometric_independence}
The following is known from Lemma 3.10 in~\cite{craciun2021uniqueness}: If the vertices of a weakly reversible deficiency zero reaction network $\GG$ consisting of a single linkage class are split between multiple linkage classes of another reaction network $\GG'$, then the stoichiometric subspaces of the linkage classes of $\GG'$ are not linearly independent.
\end{remark}

\begin{theorem}\label{thm:zero_dyn_one}

Consider two weakly reversible mass-action systems $(\GG,k)$ and $(\GG',k')$ having a deficiency of one and the same number of linkage classes. Let the linkage classes of $(\GG,k)$ be all of deficiency zero. Let the linkage classes of $(\GG',k')$ be all of deficiency zero except one which has deficiency one. Then $(\GG,k)$ and $(\GG',k')$ cannot be dynamically equivalent.

\end{theorem}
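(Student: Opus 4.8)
\section*{Proof proposal for Theorem~\ref{thm:zero_dyn_one}}

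The plan is to exploit the fact that the two prescribed linkage-class structures force incompatible behaviour of the stoichiometric subspaces, and to reduce everything to a comparison on a single shared linkage class. First I would record the standard deficiency decomposition. Writing $S_1,\dots,S_\ell$ for the stoichiometric subspaces of the linkage classes of a weakly reversible network and $\delta_1,\dots,\delta_\ell$ for their individual deficiencies, the identity $\delta = \sum_i \delta_i + \bigl(\sum_i \dim S_i - \dim S\bigr)$ holds, and since $S = \sum_i S_i$ the parenthesized term is nonnegative and vanishes exactly when the $S_i$ are linearly independent. Applying this to $(\GG',k')$, where $\sum_i\delta_i = 1 = \delta$, shows that the linkage-class subspaces of $\GG'$ are linearly independent; applying it to $(\GG,k)$, where $\sum_i \delta_i = 0 < 1 = \delta$, shows those of $\GG$ are not. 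This is the structural asymmetry I will turn into a contradiction.

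Next I would pin down the vertex sets. Since both networks are weakly reversible, every vertex is a source vertex, and by Remark~\ref{rmk:dyn_equiv_net_vectors} dynamical equivalence means the net vector attached to each monomial $\bx^{\by}$ agrees across the two networks; in particular the non-ghost vertices together with their net vectors coincide. Each linkage class of $\GG$ is itself a weakly reversible deficiency zero network, so by Corollary~\ref{cor:def_zero_ghost} it has no ghost vertices, whence $\GG$ has none at all. By Theorem~\ref{thm:bound_ghost_vertex}, $\GG'$ has at most one ghost vertex $Q$. If $Q$ existed it would satisfy $Q \notin V(\GG)$ and $|V(\GG')| = |V(\GG)| + 1$; combined with $S(\GG)=S(\GG')$ from Lemma~\ref{lem:weak_stoichiometric} and the equal number of linkage classes, the deficiency formula would give $\delta(\GG') = \delta(\GG) + 1 = 2$, a contradiction. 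Hence $\GG'$ has no ghost either and $V(\GG) = V(\GG')$.

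With equal vertex sets I would match the two linkage-class partitions. Each linkage class $L$ of $\GG$ is a weakly reversible deficiency zero network on a single linkage class, so if its vertices were split among two or more linkage classes of $\GG'$, then Remark~\ref{rmk:stoichiometric_independence} would make the linkage-class subspaces of $\GG'$ linearly dependent, contradicting their independence established above. Therefore every linkage class of $\GG$ sits inside a single linkage class of $\GG'$; that is, the partition induced by $\GG$ refines the one induced by $\GG'$. Since the two partitions have the same number of blocks and partition the same vertex set, they must be equal.

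Finally I would localize to the exceptional class. The deficiency one linkage class $L'$ of $\GG'$ shares its vertex set with some linkage class $L$ of $\GG$, which has deficiency zero. Because the net vector of a vertex only involves edges inside its own linkage class, restricting the dynamical equivalence of $\GG$ and $\GG'$ to this common vertex set shows that $L$ and $L'$ are dynamically equivalent weakly reversible realizations, each consisting of a single linkage class. Lemma~\ref{lem:weak_stoichiometric} then forces $S(L) = S(L')$, so $\dim S(L) = \dim S(L')$; but the deficiency formula gives $\dim S(L) = |V_L| - 1$ and $\dim S(L') = |V_L| - 2$, a contradiction. I expect the main obstacle to be the partition-matching step: dynamically equivalent networks need not share linkage-class structure in general, so the argument genuinely relies on coupling the linear independence of $\GG'$'s subspaces with Remark~\ref{rmk:stoichiometric_independence}, and on first ruling out the ghost vertex so that the two partitions really are partitions of the same set.
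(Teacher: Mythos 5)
Your proof is correct, and it draws on the same toolkit as the paper's own argument: Lemma~\ref{lem:weak_stoichiometric} to equate the stoichiometric subspaces, the deficiency formula to equate vertex counts, Corollary~\ref{cor:def_zero_ghost} and Theorem~\ref{thm:bound_ghost_vertex} to equate vertex sets, the linear independence of the linkage-class subspaces of $\GG'$, and Remark~\ref{rmk:stoichiometric_independence}. The difference lies in how the final dichotomy is resolved. Once $V(\GG)=V(\GG')$ is known, either some linkage class of $\GG$ is split across several linkage classes of $\GG'$, or the two linkage-class partitions of the common vertex set coincide. The paper asserts that the split case must occur and then contradicts the linear independence of the subspaces $S(L_i')$ via Remark~\ref{rmk:stoichiometric_independence}; it never addresses the coinciding-partition case. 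You invert this: you use Remark~\ref{rmk:stoichiometric_independence} together with the independence of the $S(L_i')$ to rule out any split, conclude that the partitions are equal, and then obtain the contradiction by localizing to the exceptional class --- the deficiency-one class of $\GG'$ and the deficiency-zero class of $\GG$ share a vertex set, are dynamically equivalent weakly reversible single-linkage-class realizations (since net vectors are determined within a linkage class), so Lemma~\ref{lem:weak_stoichiometric} forces $S(L)=S(L')$ while the deficiency formula forces $\dim S(L)$ and $\dim S(L')$ to differ by one. This localization step is precisely what the paper's proof leaves unjustified, so your version is the more complete of the two; you also prove, rather than merely assert, the independence of the $S(L_i')$ via the decomposition $\delta-\sum_i\delta_i=\sum_i\dim S_i-\dim S$. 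A small simplification available to you in the equal-partition case: no dynamics is needed there at all, since two connected (weakly reversible) graphs on the same vertex set automatically have the same stoichiometric subspace, namely $\mathrm{span}\{\by_i-\by_j \mid \by_i,\by_j\in V_L\}$, which yields the dimension contradiction directly.
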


\begin{proof}
For contradiction, assume that $(\GG,k)$ and $(\GG',k')$ are dynamically equivalent. Note that by Lemma~\ref{lem:weak_stoichiometric}, $(\GG,k)$ and $(\GG',k')$ have the same stoichiometric subspace. Since they have the same deficiency $(\delta = 1)$ and the same number of linkage classes, they have the same number of vertices. We now claim that they have the same set of vertices. Note that each linkage class $\GG$ has deficiency zero. By Corollary~\ref{cor:def_zero_ghost}, $\GG$ has no ghost vertex. Since the monomials in a dynamical system are linearly independent, $\GG$ and $\GG'$ have the same set of vertices. 

Note that the stoichiometric subspaces of the linkage classes of $\GG'$ are linearly independent. Further, since the number of linkage classes in $\GG$ and $\GG'$ are the same, there exists a deficiency zero linkage class in $\GG$ that has vertices split between multiple linkage classes of $\GG'$. By Remark~\ref{rmk:stoichiometric_independence}, this implies that the stoichiometric subspaces of the linkage classes of $\GG'$ cannot be linearly independent, a contradiction.
\end{proof}

\section{Discussion}\label{sec:discussion}

Network identification is an important problem in the analysis of biological systems. The property of dynamical equivalence makes the problem of network identification quite difficult. Identifying a particular type of realization like weakly reversible or endotactic given a dynamical system and a set of source vertices places no premature bound on the number of vertices in the network. In this paper, we show that to identify a strongly endotactic, endotactic, consistent or conservative realizations, it suffices to consider only the given source vertices. This builds upon earlier work by Craciun, Jin and Yu~\cite{craciun2020efficient}, who showed that this property is true for detailed balanced, complex balanced and weakly reversible realizations. 

In addition, we have studied the uniqueness/non-uniqueness of realizations for weakly reversible networks having low deficiency. In particular, we show that weakly reversible deficiency one realizations are not unique. This builds upon earlier work by Craciun, Jin and Yu~\cite{craciun2021uniqueness} who showed that weakly reversible deficiency zero realizations are unique. Further, we have tried to characterize the dynamical relationships between several types of weakly reversible deficiency one networks. In particular, Theorem~\ref{thm:zero_dyn_one} says that two weakly reversible deficiency one realizations such that one realization has all linkage classes of deficiency zero and the other realization has all linkage classes of deficiency zero except one which has deficiency one, cannot be dynamically equivalent.

A future line of research is to extend the set of inclusions obtained among several types of deficiency one networks in the context of dynamical equivalence to networks of higher deficiency. Another direction worthy of exploration is to design efficient algorithms to detect weakly reversible networks of high deficiency.

\section{Acknowledgements}

I thank Gheorghe Craciun for useful discussions and suggestions for improving the presentation of the paper.

\bibliographystyle{amsplain}
\bibliography{Bibliography}

\end{document}